\newcommand{\Ell}{\mathrm{Ell}}
\newcommand{\Hyp}{\mathrm{Hyp}}
\newcommand{\Inv}{\mathrm{Inv}}
\DeclareMathOperator\longleftharpoondown{\pspicture(0,0)(0,0)\psset{linewidth=0.pt}\psline[](0.2,0.15)(-0.2,0.15)\psline[](-0.2,0.15)(-0.125,0.075)\endpspicture}
\newcommand{\PGL}{\mathrm{PGL}}
\newcommand{\conv}{\mathrm{conv}}
\newcommand{\arrow}{\rightarrow}
\DeclareMathOperator{\id}{id}
\DeclareMathOperator{\St}{St}
\DeclareMathOperator{\Aut}{Aut}
\DeclareMathOperator{\Fix}{Fix}
\DeclareMathOperator{\Sym}{Sym}
\newtheorem{theorem}{Theorem}[section]
\newtheorem{lemma}[theorem]{Lemma}
\newtheorem{proposition}[theorem]{Proposition}
\newtheorem{corollary}[theorem]{Corollary}
\theoremstyle{definition}
\newtheorem{definition}[theorem]{Definition}
\theoremstyle{remark}
\newtheorem{remark}[theorem]{Remark}
\newtheorem{example}[theorem]{Example}
\title{Automorphism groups of trees:  generalities and prescribed local actions}
\author{Alejandra Garrido, Yair Glasner and Stephan Tornier}
\begin{document}

\begin{abstract}
This article is an expanded version of the talks given by the authors at the Arbeitsgemeinschaft ``Totally Disconnected Groups'', held at Oberwolfach in October 2014. We recall the basic theory of automorphisms of trees and Tits' simplicity theorem, 
and present two constructions of tree groups via local actions with their basic properties:
the universal group associated to a finite permutation group by M.\ Burger and S.\ Mozes,
and the $k$-closures of a given group by C.\ Banks, M.\ Elder and G.\ Willis.
\end{abstract}

\maketitle

\vspace{-1.1cm}
\section{Introduction}

In the study of totally disconnected locally compact (t.d.l.c.) Hausdorff groups,
groups of automorphisms of locally finite trees appear naturally and form a significant class of examples. 
They are also the most basic case of groups acting on buildings or CAT(0) cube complexes.
Moreover, they play an important role in the structure theory of compactly generated t.d.l.c.\ groups through Schreier graph constructions
(see \cite[\S 11]{Mon} and M.\ Burger's chapter).

In Section 2, we present general results and notions about automorphisms of infinite, locally finite trees. Section \ref{sec:Tits} deals with independence properties 
(the idea that restrictions of the action to subtrees are in some sense independent of each other)
and a general form of Tits' simplicity theorem, Theorem \ref{Tits-k}.
This theorem extracts abstractly simple subgroups from groups acting on the tree in a sufficiently dense and independent way.

We then present two constructions of closed subgroups of tree automorphisms with prescribed local actions
and examine some of the local-global phenomena that they exhibit.
First, we present in Section \ref{sec:universal} the universal group construction by M. Burger and S. Mozes (see \cite{BM00a}),
which to every permutation group $F\le S_{d}$ associates a subgroup $\mathrm{U}(F)\le\Aut(T_{d})$
which locally acts like $F$ (here, $T_{d}$ denotes the $d$-regular tree).
Properties of the groups $\mathrm{U}(F)$ are often determined by those of the finite group $F$.
Universal groups are fundamental in the study of lattices in the product of the automorphism groups of two trees,
analogous to the study of lattices in semisimple Lie groups, 
and are key to the proof of the normal subgroup theorem (see \cite{BM00b} and L.\ Bartholdi's chapter). Second, we describe in Section \ref{sec:k-closure} a variation of this construction by C. Banks, M. Elder and G. Willis (see \cite{BEW11}): Given $G\le\Aut(T)$ for an arbitrary tree $T$ it produces a sequence of closed subgroups $G^{(k)}\le\Aut(T)$ which act like $G$ on balls of radius $k$. This sequence converges to the topological closure $\overline{G}$ of $G$ from above, in a sense to be made precise. We will use Tits' simplicity theorem to construct infinitely many,
pairwise distinct, non-discrete, compactly generated, abstractly simple, t.d.l.c.\ groups. Finding simple t.d.l.c.\ groups (and in particular, compactly generated ones) has become relevant to the structure theory of t.d.l.c.\ groups 
thanks to results of Caprace--Monod \cite{CM11}, which state that such groups can be decomposed into simple pieces.

\section{Generalities on trees}\label{generalities}
\subsection{Trees} Let $T$ be a simplicial tree with vertex set $V(T)$ and edge set $E(T)\subset V(T)\times V(T)$.
We will always assume that $V(T)$ is countable and that $T$ is not isomorphic to a bi-infinite line.
Each edge $e\in E(T)$ is determined by its origin $o(e)\in V(T)$ and its terminus $t(e)\in V(T)$,
so the edge $\bar{e}=(t(e),o(e))$ is the \emph{inverse} of $e$. 
The pair $\{e, \bar{e}\}$ is a \emph{geometric edge}.
Given any edge $e=(x,y)\in E(T)$, the subgraph $T\setminus \{e,\bar{e}\}$ has two connected components $T_x, T_y$ which we call \emph{half-trees}.
We think of $T_x$ as a subtree rooted at $x$.

A \emph{path} is given by a sequence of adjacent vertices, and it is a \emph{reduced path} (or \emph{geodesic}) if there is no backtracking in the sequence.
Setting adjacent vertices to have distance 1 between them yields the standard metric on $T$, denoted by $d$. 
Given a vertex $x\in V(T)$, we set $E(x):=\{e\in E(T)\mid o(e)=x\}$ to be the set of edges starting at $x$ and write $B(x,n)$ for the ball of radius $n$ centred at $x$ as well as $S(x,n)=\{y\in V(T)\mid d(x,y)=n\}$ for the sphere of radius $n$ around $x$.

An isometric embedding of $\mathbb{R}_{\ge 0}$ into $T$ is called a {\it{ray}}. We will say that two rays $\alpha, \beta: \mathbb{R}_{\ge 0} \arrow T$ are equivalent, 
denoting it by $\alpha \sim \beta$, if there exists some $R\in\mathbb{R}$ such that $\alpha(r+s) = \beta(s), \forall r \ge R$.
We call the collection of equivalence classes of such rays the {\it boundary} of the tree and denote it by
$$\partial T = \{\alpha : \mathbb{R}_{\ge 0} \arrow T\}/\sim.$$
The rays $\alpha, \beta$ are \emph{at bounded distance from each other} if the function $f(t) = d(\alpha(t),\beta(t))$ is bounded. This \emph{a priori} weaker equivalence relation is actually the same as the previous one:
it is impossible for two rays to be at bounded distance from each other without actually coinciding eventually. 

We introduce a topology on $\partial T$ (resp. on $T \cup \partial T$) by taking $\{\partial Y \ | \ Y {\text{ is half a tree}}\}$ (resp. $\{Y \cup \partial Y \ | \ Y {\text{ is half a tree}}\})$ as a basis of open sets. It is easy to see that this makes $T$ into an discrete, open and dense subset of $T \cup \partial T$. When $T$ is locally finite then both $\partial T$ and $T \cup \partial T$ are compact with this topology. 

Let $\Aut(T)$ be the automorphism group of $T$. 
Suppose that $G\leq \Aut(T)$ and that $Y$ is a subgraph of $T$. 
The \emph{stabilizer} $\St_G(Y)$ of $Y$ consists of elements $g\in G$ such that $g Y=Y$, while
the \emph{fixator} $\Fix_G(Y)$ of $Y$ consists of $g\in G$ such that $g y=y$ for every vertex $y$ of $Y$. 
We say that $G$ acts \emph{edge-transitively} (respectively, \emph{vertex-transitively}) if $G$ acts transitively on geometric edges (respectively, vertices).

The topology of pointwise convergence on vertices gives $\Aut(T)$ the structure of a Polish (i.e. metrizable, separable and complete) topological group.
The basic open sets for this topology are of the form
$$\mathcal{U}(g,\mathcal{F}):=\{ h\in \Aut(T) \mid hx=gx \text{ for all } x\in \mathcal{F}\},$$
where $g \in \Aut(T)$ and $\mathcal{F}$ is a finite subset of $V(T)$.
In particular, $\Fix(\mathcal{F})$ is open for any finite $\mathcal{F}$.
We will sometimes assume that the tree is locally finite (i.e.\ every vertex has only finitely many neighbors).
In this case the balls $(B(x,n))_n$ around any vertex $x$ are finite.
Since these balls are preserved by the stabilizer $\St(x)$, this open subgroup is the inverse limit of its restriction to balls of finite radius and is therefore compact.
Thus when $T$ is locally finite $\Aut(T)$ is a t.d.l.c.\ group.

\subsection{Classification of automorphisms}
We will say that two directed edges are {\it{co-oriented}} if their orientations agree along the unique geodesic connecting them (see Figure \ref{fig:co_oriented}). 
For an automorphism $\phi \in \Aut(T)$, define $\ell(\phi) = \min\{d(x,\phi x) \ | \ x \in T \}$ and $X(\phi) = \{x \in T \ | \ d(x,\phi(x)) = \ell(\phi)\}$. 
The minimum above is taken over all points $x$ in the geometric realization of the tree.
Thus, for example, if $\phi$ inverts an edge $e$ then the set $X(\phi)$ will be the midpoint of the corresponding geometric edge.
Due to the simplicial nature of the action, this minimal set is actually realized and we are allowed to talk about the minimum rather than the infimum of the translation length. 
\begin{definition} \label{def:classes}
An automorphism $\phi$ is called {\it{hyperbolic}} if $\ell(\phi) > 0$, an {\it{inversion}} if it inverts an edge and {\it{elliptic}} if it fixes some vertex.
It is clear that $\Aut(T)$ is a disjoint union of these three classes and that all three classes, which we denote by $\Hyp, \Inv, \Ell \subset \Aut(T)$, are clopen.
\end{definition}

Let $\phi \in \Aut(T)$ and assume that there exists an edge $e$ that is co-oriented with its image $\phi e$.
Since being co-oriented is an equivalence relation on directed edges, it follows that $\{\phi^{n} e \ | \ n \in \mathbb{Z} \}$ are all co-oriented along a bi-infinite geodesic $X$ and $\phi$ restricts to a translation of length $\ell = d(e, \phi e)+1$ along this axis.
Once we have such an invariant axis, then for every vertex $x \in T$ we have $d(x,\phi x) = \ell + 2 d(x,X) \ge \ell$ (see Figure \ref{fig:co_oriented}).
In particular, as our notation already suggests, $X= X(\phi), \ell = \ell(\phi)$. 
\begin{figure}[ht] 
\centering \def\svgwidth{300pt}
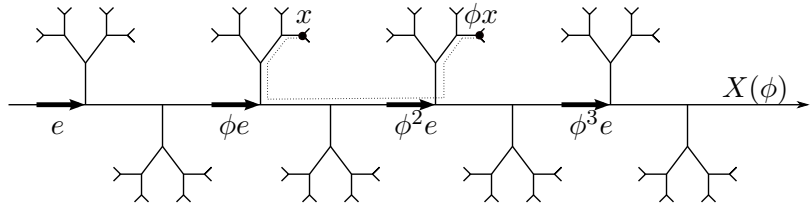 
\caption{A hyperbolic element.}
\label{fig:co_oriented}
\end{figure}
Thus, whenever there exists an edge that is co-oriented with its image, $\phi$ is hyperbolic.
Conversely, if $\ell(\phi) > 0$, let $x \in X(\phi)$, and let $x=x_0, x_1,\ldots, x_{\ell(\phi)}$ be the geodesic connecting $x$ to $\phi x$. 
The directed edge $e = (x_0,x_1)$ must be co-oriented with its image since otherwise we would have $d(x_1,\phi x_1) < d(x, \phi x)$,
contradicting our choice of $x$.
A similar picture shows that the general formula $d(x,\phi x) = \ell(\phi) + 2 d(x,X(\phi))$ holds also for elliptic elements and inversions. 

When $\phi \in \Hyp$ we denote by $a_{\phi}, r_{\phi} \in \partial T$ the two points of $\partial T$ corresponding to the two 
extremes of the axis $X(\phi)$. 
The point $a_{\phi}$, represented by the ray in the direction of translation, is called {\it{the attracting point of}} and 
$r_{\phi}$ is called its {\it repelling point}.
Note that if $a_{\phi} \in A, r_{\phi} \in R$ are open neighborhoods,
then for every large enough $n \in \mathbb{N}$ we have $\phi^n(\partial T \setminus R) \subset A$.
Since $\phi^{-1}$ and $\phi$ share the same axis, but translate in different directions, we have $a_{\phi^{-1}} = r_{\phi}, r_{\phi^{-1}} = a_{\phi}$.

If we replace the tree by its barycentric subdivision (adding one vertex in the middle of every geometric edge), we retain the same automorphism group, 
but every inversion becomes an elliptic element on the new tree\footnote{Recall that we have ruled out the case of tree consisting only of one bi-infinite geodesic.}. 
Using this trick we will assume from now on that there are no inversions in $\Aut(T)$. 

\begin{definition}
 A subgroup $G \le \Aut(T)$ is \emph{purely elliptic} (respectively, \emph{purely hyperbolic}) if all of its (non-trivial) elements are elliptic (respectively, hyperbolic).
\end{definition}

\subsection{Purely elliptic subgroups}
\begin{lemma}[Tits] \label{lem:Tits} (\cite[Proposition 26]{S03})
Let $\phi, \psi \in \Aut(T)$ be two elliptic elements with $X(\phi) \cap X(\psi) = \emptyset$. 
Then $\phi \psi, \psi \phi \in \Hyp$.  
\end{lemma}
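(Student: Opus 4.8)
The plan is to prove that $g=\psi\phi$ is hyperbolic; the assertion for both $\psi\phi$ and $\phi\psi$ then follows at once, because $\phi\psi=\phi(\psi\phi)\phi^{-1}$ is conjugate to $\psi\phi$ and conjugation preserves translation length, hence the elliptic/hyperbolic classification. To detect hyperbolicity I would use the characterization recalled above: an automorphism that admits a directed edge co-oriented with its image is hyperbolic. Since $\phi,\psi$ are elliptic, the minimal sets $X(\phi)=\Fix(\phi)$ and $X(\psi)=\Fix(\psi)$ are nonempty subtrees, and by hypothesis they are disjoint. In a tree two disjoint subtrees are joined by a unique shortest geodesic, so I would let $p\in X(\phi)$ and $q\in X(\psi)$ be the unique closest pair and write $p=v_0,v_1,\dots,v_n=q$ for this bridge, with $n=d(X(\phi),X(\psi))\ge 1$. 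The elementary facts I would record first are that the bridge meets $X(\phi)$ only in $p$ and $X(\psi)$ only in $q$ (otherwise it could be shortened), and that $p$ is the nearest-point projection of $q$ onto $X(\phi)$ and symmetrically for $q$.

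From these facts I would read off how $\phi$ and $\psi$ move the ends of the bridge. As $v_1\notin X(\phi)=\Fix(\phi)$, the isometry $\phi$ fixes $p$ but sends the edge $(v_0,v_1)$ to a different edge at $p$; hence $\phi$ maps $[p,q]$ to a geodesic $[p,\phi q]$ leaving $p$ in another direction, so $[q,\phi q]$ passes through $p$ and $d(q,\phi q)=2n$. Symmetrically, since $v_{n-1}\notin X(\psi)=\Fix(\psi)$, the isometry $\psi$ fixes $q$ and reflects the geodesic $[q,\phi q]$ across $q$ onto a geodesic $[q,gq]$ that leaves $q$ in a direction different from the one towards $p$, again with $d(q,gq)=2n$. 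I would then track the directed edge $e=(v_{n-1},q)$, which points into $q$ along the bridge. Applying first $\phi$ and then $\psi$, and using that $\psi$ acts isometrically on $[q,\phi q]$ while fixing $q$, I would identify $ge=\psi\phi e$ as the last edge of $[q,gq]$, oriented towards the far endpoint $gq$.

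It then remains to compare orientations. The unique geodesic connecting $e$ to $ge$ runs from $v_{n-1}$ through $q$ and out along the far branch to $gq$; along it $e$ points forward (from $v_{n-1}$ to $q$) and $ge$ also points forward (towards $gq$), so $e$ and $ge$ are co-oriented. By the characterization above, $g=\psi\phi\in\Hyp$, with axis through the bridge and $\ell(g)=2n$, and $\phi\psi$ is hyperbolic by conjugacy. I expect the main obstacle to be exactly this orientation bookkeeping: one must check that the two edges agree along the connecting geodesic rather than point towards or away from each other, since that distinction is precisely what separates a hyperbolic product from an elliptic one. Care is also needed to justify that $\psi$ carries $[q,\phi q]$ isometrically onto a geodesic leaving $q$ away from $p$; this rests on $v_{n-1}\notin\Fix(\psi)$ and $v_1\notin\Fix(\phi)$, which is where the disjointness of $X(\phi)$ and $X(\psi)$ is genuinely used.
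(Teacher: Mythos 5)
Your proof is correct and takes essentially the same approach as the paper: both arguments pick a directed edge of the bridge between $X(\phi)$ and $X(\psi)$ and verify that it is co-oriented with its image under $\psi\phi$, then invoke the co-orientation criterion for hyperbolicity from Section 2.2. The paper uses the first edge of the bridge and defers the verification to a figure, whereas you use the last edge, spell out the orientation bookkeeping explicitly, and add the (correct) conjugation remark $\phi\psi=\phi(\psi\phi)\phi^{-1}$ to deduce the statement for the other product.
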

\begin{proof}
Let $x_0,x_1,x_2,\ldots, x_m$ be the shortest geodesic from $X(\phi)$ to $X(\psi)$.
Let $e = (x_0,x_1)$ be the first directed edge along this path.
It is clear from Figure \ref{fig:Tits_lem} that $e$ and $\psi \phi e$ are co-oriented which proves the lemma. 
\begin{figure}[ht] 
\centering \def\svgwidth{300pt}
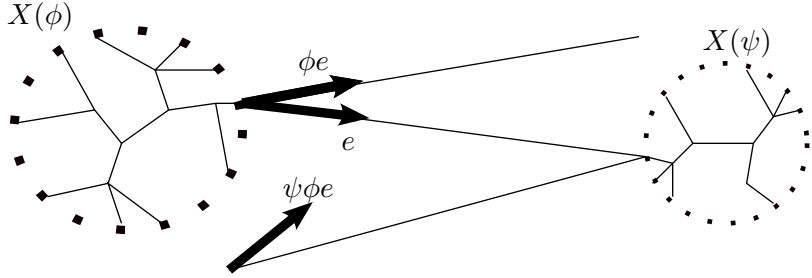 
\caption{Tits' lemma.}
\label{fig:Tits_lem}
\end{figure}

\end{proof}
The following is a Helly type lemma, capturing the fact that a tree is essentially a one-dimensional object. 
\begin{lemma} \label{lem:Helly}
If $X_1,X_2,X_3$ are convex subsets of $T$ with pairwise non-empty intersections then $X_1 \cap X_2 \cap X_3 \ne \emptyset$.
\end{lemma}
\begin{proof}
We refer to the indices modulo $3$.
Pick a point $y_{i,j} \in X_i \cap X_j$.
By convexity, the geodesic from $y_{i,i+1}$ to $y_{i,i+2}$ is contained in $X_i$.
Now, in any tree, every triangle has at least one point common to all of its edges (a tree is $0$-hyperbolic).
For the triangle with vertices $y_{i,j}$ this point will be in the desired intersection $X_1 \cap X_2 \cap X_3$. 
\end{proof}

\begin{corollary}[Classification of totally elliptic subgroups] \label{cor:pe}
Let $G \le \Aut(T)$ be a purely elliptic subgroup.
Then $G$ fixes a point in $T \cup \partial T$. 
If $G$ is finitely generated then it fixes a point in $T$. 
\end{corollary}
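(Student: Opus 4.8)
The plan is to realise the common fixed point as a point shared by the fixed-point sets $X(\phi)=\Fix(\phi)$ of all non-trivial $\phi\in G$. Since $G$ is purely elliptic, each such $\phi$ has $\ell(\phi)=0$, so $X(\phi)$ is a non-empty subtree, and it is convex because an automorphism fixing two vertices fixes the entire geodesic between them. First I would record that these subtrees meet pairwise: if $X(\phi)\cap X(\psi)=\emptyset$ for $\phi,\psi\in G$, then Lemma~\ref{lem:Tits} gives $\phi\psi\in\Hyp$, contradicting that $\phi\psi\in G$ is elliptic.

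Next I would upgrade Lemma~\ref{lem:Helly} to finitely many sets by induction: if $X(\phi_1),\dots,X(\phi_n)$ meet pairwise, the $n-1$ convex sets $X(\phi_i)\cap X(\phi_n)$ again meet pairwise by the three-set case, so the inductive hypothesis applies. Consequently every finite intersection $C_F:=\bigcap_{\phi\in F}X(\phi)$ is a non-empty convex subtree. This already disposes of the finitely generated case: writing $G=\langle g_1,\dots,g_n\rangle$, any vertex of $\bigcap_i X(g_i)$ is fixed by every generator, hence by all of $G$, and lies in $T$.

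For the general case I would fix a base vertex $x_0$ and let $q_F$ be its nearest-point projection onto the convex subtree $C_F$. The key bookkeeping is the coherence relation: for $F\subseteq F'$ one has $C_{F'}\subseteq C_F$, and the geodesic $[x_0,q_{F'}]$ passes through $q_F$, so the $q_F$ move monotonically outward along geodesics issuing from $x_0$. Put $D=\sup_F d(x_0,q_F)\in\mathbb{Z}_{\ge0}\cup\{\infty\}$. If $D<\infty$ the supremum is attained (distances are integers); for the attaining $F$ and any $\phi\in G$, monotonicity and coherence force $q_{F\cup\{\phi\}}=q_F$, so $q_F\in C_{F\cup\{\phi\}}\subseteq X(\phi)$, and thus $G$ fixes the vertex $q_F\in T$. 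If $D=\infty$, coherence lets me assemble the $q_F$ into a single ray $\xi$ based at $x_0$; for each $\phi$, once $\phi\in F$ the tail of $\xi$ beyond $q_F$ lies in the convex set $X(\phi)$, so $\xi$ is an end of $X(\phi)$, and $\phi$ --- fixing $X(\phi)$ pointwise --- fixes $\xi$. Hence $G$ fixes $\xi\in\partial T$.

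The step I expect to be most delicate is this last passage from finite intersections to a genuine global fixed point or end: one must verify that the projections $q_F$ really do cohere into a well-defined ray and that the relevant tails lie inside each $X(\phi)$. It is precisely finite generation that collapses the supremum $D$ to a finite, attained value and thereby keeps the fixed point inside $T$ rather than on the boundary.
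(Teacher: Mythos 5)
Your proof is correct, and while your finitely generated case coincides with the paper's (the fixed trees $X(\phi)$ meet pairwise by Lemma \ref{lem:Tits}, and your induction via the sets $X(\phi_i)\cap X(\phi_n)$ is exactly the paper's ``applying Lemma \ref{lem:Helly} successively'' spelled out), your treatment of the general case is genuinely different. The paper assumes there is no fixed vertex, defines at each moved vertex $x$ a step $\sigma(x)$ towards $X(g)$ (independent of the choice of $g$ moving $x$, by the pairwise intersection property), shows $\sigma$ is $1$-Lipschitz so that the rays $\alpha_x=(x,\sigma x,\sigma^2 x,\dots)$ based at different vertices all define one end, and gets $G$-invariance of that end from the equivariance $\alpha_{gx}=g\alpha_x$. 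You instead run a directed-system argument on the finite intersections $C_F$: the nearest-point projections $q_F$ of a fixed basepoint cohere ($q_F\in[x_0,q_{F'}]$ whenever $F\subseteq F'$), and the dichotomy $D=\sup_F d(x_0,q_F)<\infty$ versus $D=\infty$ yields either a point of $T$ fixed by all of $G$ or a geodesic ray whose tails lie in every $X(\phi)$, hence an end fixed by every element. Your route buys two things: it never needs to presuppose the absence of a fixed vertex (the dichotomy discovers it), and invariance of the end follows from honest pointwise fixing of tails, sidestepping two facts the paper's argument uses tacitly --- that one can choose $g\in G$ moving two prescribed vertices simultaneously (a group is never the union of two proper subgroups) and that the $\alpha_x$ are genuine geodesic rays; it also adapts with no change to non-locally-finite trees. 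The paper's construction, in return, is shorter and more concrete, giving a canonical ``direction of escape'' at every vertex. (Both proofs share one harmless elision: the non-empty intersection of the $X(g_i)$ contains a vertex, not merely an edge-midpoint, because inversions have been excluded.)
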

\begin{proof}
Assume first that $G$ is generated by a finite set $S$.
By Lemma \ref{lem:Tits},for any pair $s_i,s_j$ of generators, $X(s_i) \cap X(s_j) \ne \emptyset$.
Now, applying Lemma \ref{lem:Helly} successively, we find that $\bigcap_{s \in S} X(s) \ne \emptyset$, which proves the finitely generated case.

Assume that $G$ is purely elliptic but does not fix any point within the tree.
Given $x \in V(T)$ and $g \in G$ such that $gx \ne x$ we let $\sigma(x)$ be the neighbour of $x$ in the direction of $X(g)$. 
The point is that $\sigma x$ does not depend on $g$ as, by the finitely generated case, $X(g), X(h)$ are convex subtrees with a non-trivial intersection.
Consider the geodesic ray $\alpha_x := (x, \sigma x, \sigma^2 x, \ldots)$.
For two different vertices $x,y$ we have $d(\sigma(x), \sigma(y)) \le d(x,y)$ 
since we can choose some $g$ that fixes neither $x$ nor $y$ and $\sigma$ would then just be one step towards the convex set $X(g)$.
Thus $\alpha_x \sim \alpha_y$ represent the same point in the boundary. 
Moreover this boundary point is fixed by $G$ as $\alpha_x \sim \alpha_{gx} = g \alpha_x \in \partial T$. 
\end{proof}

\subsection{Geometric Density}
\begin{definition}\label{def:geom_dense}
A group $G \le \Aut(T)$ is called {\it{geometrically dense}} if it does not fix (pointwise) any end of $T$ and does not stabilize (as a set) any proper subtree of $T$. 
\end{definition}
This notion appears as a condition in Tits' simplicity theorem (our Theorem \ref{Tits-k}).
It can be generalized to much more general CAT(0) spaces, 
and in this capacity plays an important role in the work of Caprace--Monod \cite{CM09a,CM09b}
who argue that it should be thought of as a a geometric analogue of Zariski density.
For example, given a local field $k$ and considering the action of $\PGL_2(k)$ on its Bruhat--Tits tree, 
a subgroup $\Gamma < \PGL_2(k)$ is geometrically dense if and only if it is Zariski dense as a subgroup of $\PGL_2(K)$.
Here $K$ is the algebraic closure of $k$. 

\begin{lemma} \label{lem:min_tree}
If $G \le \Aut(T)$ contains at least one hyperbolic element
then there is a unique minimal $G$-invariant subtree of $T$. 
\end{lemma}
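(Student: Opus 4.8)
The plan is to build the minimal subtree by hand as the convex hull of all the translation axes, rather than by intersecting all invariant subtrees; the latter could in principle shrink towards a boundary point and yield an empty intersection, which is precisely the failure mode the existence of a hyperbolic element is meant to rule out.

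Concretely, I would set $T_0 = \bigcup_{\phi} X(\phi)$, the union of the axes of all hyperbolic elements $\phi \in G$, and take $T_{\min} = \conv(T_0)$. This is nonempty since $G$ contains at least one hyperbolic element. It is $G$-invariant because for any $g \in G$ the conjugate $g\phi g^{-1}$ is again hyperbolic with axis $g\,X(\phi)$, so $G$ merely permutes the axes and preserves $T_0$; as automorphisms are isometries they also preserve convex hulls, whence $g\,T_{\min} = T_{\min}$. Being convex, $T_{\min}$ is connected, hence a genuine subtree.

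The heart of the argument is to show that every nonempty $G$-invariant subtree $S$ already contains $T_{\min}$; since $T_{\min}$ is itself such a subtree, this establishes minimality and uniqueness in one stroke. It is enough to check that $S \supseteq X(\phi)$ for each hyperbolic $\phi$, for then $S \supseteq T_0$ and, $S$ being convex, $S \supseteq \conv(T_0) = T_{\min}$. To see this, fix any vertex $v \in S$ and let $p$ be its projection onto $X(\phi)$; then $\phi p$ is the projection of $\phi v$, and by the formula $d(v,\phi v) = \ell(\phi) + 2\,d(v,X(\phi))$ the geodesic from $v$ to $\phi v$ passes through $p$ and $\phi p$, so it contains the axis-segment $[p,\phi p]$. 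Since $v, \phi v \in S$ and $S$ is convex, $[p,\phi p] \subseteq S$; applying $G$-invariance to the powers $\phi^n$ gives every translate $[\phi^n p, \phi^{n+1}p] \subseteq S$, and the union $\bigcup_{n\in\mathbb{Z}} [\phi^n p, \phi^{n+1} p]$ is all of $X(\phi)$.

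I expect the main obstacle to be exactly this last inclusion $X(\phi) \subseteq S$, and it is precisely the translation-length formula recorded above that removes it, by forcing a single point of an invariant convex set to drag the whole bi-infinite axis into $S$. With that inclusion in hand the lemma is immediate: $T_{\min}$ sits inside every invariant subtree while being one itself, so it is the unique minimal one.
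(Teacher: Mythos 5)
Your proof is correct and follows essentially the same route as the paper: take the convex hull of the axes of all hyperbolic elements, note it is $G$-invariant since conjugation permutes the axes, and show it lies in every nonempty $G$-invariant subtree. The only difference is that you spell out in detail (via the projection $p$, the formula $d(v,\phi v)=\ell(\phi)+2\,d(v,X(\phi))$, and the translates $[\phi^n p,\phi^{n+1}p]$) the key inclusion $X(\phi)\subseteq S$, which the paper justifies only by pointing to a figure.
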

\begin{proof}
If $g \in G \cap \Hyp$ then its axis $X(g)$ is contained in every nonempty convex $G$-invariant set (refer to Figure \ref{fig:co_oriented}).
Let $Y \subset T$ be the smallest convex set containing the axes of all hyperbolic elements of $T$. 
Since $X(hgh^{-1}) = h X(g)$ the collection $\{X(g) \ | \ g \in G \cap \Hyp\}$ is invariant under $G$ and so is its convex hull $Y$.
Thus $Y$ is an invariant subtree which is contained in any other $G$-invariant tree.
\end{proof}
\begin{remark} \label{rem:denseL}
In the setting of the above lemma, let $L = \{a_g \ | \ g \in G \cap \Hyp\} \subset \partial T$ be the collection of attracting points of all the hyperbolic elements in $G$. 
As $L$ is $G$-invariant, so is its convex hull $\conv(L) \subset T \cup \partial T$. 
Thus $Y = \conv(L) \cap T$ is a $G$-invariant tree. 
Since $G$ contains a hyperbolic element $g$, the set $L$ contains at least two points $\{a_g, r_g = a_{g^{-1}} \}$ and $Y$ is non-empty.
If we further assume that $G$ is geometrically dense then $Y = T$ and consequently $L$ must be dense in $\partial T$. 
\end{remark}

\begin{lemma} \label{lem:gd_hyp}
Every geometrically dense subgroup $G \leq \Aut(T)$ contains a hyperbolic element. Furthermore, given any half-tree $Y \subset T$, the group $G$ contains a hyperbolic element whose axis is contained in $Y$.  
\end{lemma}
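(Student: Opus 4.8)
The plan is to prove the two assertions separately: the bare existence of a hyperbolic element follows from the classification of purely elliptic groups, while the refinement is a ping-pong argument on the boundary.

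For the first assertion I would argue by contradiction. Since we have arranged that $\Aut(T)$ has no inversions, a subgroup containing no hyperbolic element is purely elliptic. By Corollary~\ref{cor:pe} such a $G$ then fixes a point of $T\cup\partial T$. But fixing a vertex $v$ means $G$ stabilizes the proper subtree $\{v\}$, and fixing a boundary point means $G$ fixes an end of $T$; both contradict geometric density (which in particular forces $G$ to be nontrivial). Hence $G$ must contain a hyperbolic element.

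For the refinement I would first isolate the geometric point: a hyperbolic $\psi$ has $X(\psi)\subseteq Y$ as soon as both endpoints $a_\psi,r_\psi$ lie in $\partial Y$. Indeed $Y$ is convex and is separated from its complement by a single geometric edge, so a bi-infinite geodesic with both ends in $\partial Y$ cannot leave $Y$ without traversing that edge twice, which a reduced path cannot do. Thus it suffices to manufacture a hyperbolic element both of whose endpoints lie in the open set $\partial Y$. Here I would invoke Remark~\ref{rem:denseL}: geometric density makes $L=\{a_g\mid g\in G\cap\Hyp\}$ dense in $\partial T$, hence dense in $\partial Y$. Choosing two hyperbolic $g,h\in G$ with $a_g,a_h\in\partial Y$ and, by a generic choice afforded by this density, arranging that $a_g,r_g,a_h,r_h$ are pairwise distinct, I fix pairwise disjoint clopen neighborhoods $A_g,A_h\subseteq\partial Y$ of $a_g,a_h$ and $R_g,R_h$ of $r_g,r_h$. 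The North--South dynamics of hyperbolic elements noted above then gives, for all large $n$, the inclusions $g^n(\partial T\setminus R_g)\subseteq A_g$ and $h^{-n}(\partial T\setminus A_h)\subseteq R_h$, whence $\psi:=g^nh^{-n}$ maps $\partial T\setminus A_h$ into $A_g$ and $\psi^{-1}$ maps $\partial T\setminus A_g$ into $A_h$. Consequently $\psi$ has an attracting point in $A_g\subseteq\partial Y$ and a repelling point in $A_h\subseteq\partial Y$; being contracting towards one end and expanding from another, $\psi$ is hyperbolic, and by the previous paragraph $X(\psi)\subseteq Y$.

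The step I expect to require the most care is the construction of $\psi$: choosing $g$ and $h$ so that their four endpoints are genuinely distinct (so that the four disjoint neighborhoods exist), and verifying that the ping-pong estimates force $\psi$ to be hyperbolic rather than an elliptic element that merely fixes an end. By comparison the convexity reduction and the contradiction in the first assertion are routine.
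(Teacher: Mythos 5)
Your first assertion and your geometric reduction (a bi-infinite geodesic with both ends in $\partial Y$ cannot traverse the root edge of $Y$ twice, hence lies in $Y$) are correct, and both are implicit in the paper as well. The genuine gap is in the step you yourself flag: ``by a generic choice afforded by this density, arranging that $a_g,r_g,a_h,r_h$ are pairwise distinct.'' Remark \ref{rem:denseL} makes the set of \emph{attracting} points dense in $\partial T$; it gives no control whatsoever over the \emph{repelling} point of the element you pick. Having chosen $g$ with $a_g\in\partial Y$, density lets you choose $h$ with $a_h\in\partial Y\setminus\{a_g,r_g\}$, but nothing prevents $r_h$ from landing exactly on $a_g$ or on $r_g$; and iterating the choice with further elements only shows that either some pair works or all the elements produced share one common repelling point --- a degenerate configuration that density alone cannot exclude. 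Ruling it out is precisely the content of the paper's proof: there one considers candidates of the form $g=fhf^{-1}$ with $f\in G$, whose endpoints are $fa_h,fr_h$, and observes that if every $f\in G$ produced a bad configuration, i.e.\ $f^{-1}r_h\in\{a_h,r_h\}$ for all $f$, then the singleton $\{r_h\}$ or the pair $\{a_h,r_h\}$ would be $G$-invariant, contradicting geometric density (and the standing assumption that $T$ is not a line). So the ``generic choice'' requires the group structure and geometric density a second time; as written, your proof assumes the crux.

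There is a secondary gap: you assert that the ping-pong inclusions force $\psi=g^nh^{-n}$ to be hyperbolic, name this as the delicate point, but never prove it; it is not automatic, since one must rule out an elliptic element compressing the boundary in this fashion (say, by analyzing the permutation it induces on the half-trees around a fixed vertex). The paper's route avoids this issue entirely, which is its main advantage over yours: instead of composing $g$ and $h$, it conjugates, setting $g_n:=h^ngh^{-n}$. A conjugate of a hyperbolic element is hyperbolic for free, with axis $h^nX(g)$; since $a_g,r_g\neq r_h$, the north--south dynamics of $h$ pushes both endpoints $h^na_g$ and $h^nr_g$ into $\partial Y$ for $n$ large, and your own convexity reduction finishes the argument. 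If you replace the composition by this conjugation trick and the ``generic choice'' by the invariance argument above, your proof closes up --- and becomes essentially the paper's.
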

\begin{proof}
Recalling that we have assumed away the existence of inversions, the existence of hyperbolic elements follows directly from Corollary \ref{cor:pe}. By Remark \ref{rem:denseL} above for every half-tree $Y$, $\partial Y$ contains an attracting point $a_h$ for some $h \in \Hyp \cap G$.

Let $g \in \Hyp \cap G$ be another hyperbolic element with attracting and repelling fixed points $a_g,r_g$.
The conjugations $g_n := h^n g h^{-n}$ are again hyperbolic elements with $X(g_n) = h^n X(g), a_{g_n} = h^n a_g, r_{g_n} = h^n r_g$.
If $a_g \ne r_h, r_g \ne r_h$ then, due to the proximal action of $h$ on $\partial T$, 
all of these can be ``pushed'' arbitrarily close to $a_h$, and hence deep into the half-tree $Y$, by applying high powers of $h$. 
Thus it is enough to find such a $g$. 
We choose an element of the form $g = f h f^{-1}$ where $f \in G$. 
This is hyperbolic with attracting and repelling points $a_g = f a_h, r_g = fr_h$. 
Assume by way of contradiction  that $f^{-1} r_h \in \{r_h, a_h \} $for all  $f \in G$. 
Since $G$ is a group this means that either the point $\{r_h\}$ or the  set $\{r_h, a_h\}$ is $G$-invariant, 
contradicting our assumptions that $G$ is geometrically dense and that the tree is not an infinite line. 
\end{proof}

\begin{lemma} \label{lem:gd_normal}
Let $G \leq \Aut(T)$ be geometrically dense and assume that $N\leq \Aut(T)$ is a nontrivial group normalized by $H$.
Then $N$ is geometrically dense too. 
\end{lemma}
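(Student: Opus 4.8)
The plan is to argue by contradiction, using the single structural principle that, because $G$ normalizes $N$, every subset of $T \cup \partial T$ canonically attached to $N$ is automatically $G$-invariant; geometric density of $G$ then forces such a set to be empty or all of $T$. Concretely, I would first record that for every $g \in G$ and $n \in N$ the conjugate $g^{-1}ng$ again lies in $N$, so that if a vertex $v$ (resp.\ an end $\xi$) is fixed by all of $N$ then so is $gv$ (resp.\ $g\xi$), because $n(gv) = g(g^{-1}ng)v = gv$ for every $n \in N$. Consequently the set $F$ of vertices fixed by all of $N$ is a $G$-invariant subtree, the set $E \subseteq \partial T$ of ends fixed by all of $N$ is $G$-invariant, and—by the same conjugation trick applied to Lemma \ref{lem:min_tree}—if $N$ possesses a unique minimal invariant subtree $T_N$ then $gT_N$ is again minimal $N$-invariant, whence $gT_N = T_N$ and $T_N$ is $G$-invariant.

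Next I would show that $N$ contains a hyperbolic element. If it did not, then $N$ would be purely elliptic (inversions having been removed), so by Corollary \ref{cor:pe} it fixes a point of $T \cup \partial T$. Should $N$ fix a vertex, the set $F$ is a nonempty $G$-invariant subtree, proper since $N$ is nontrivial, contradicting geometric density of $G$. Otherwise $N$ fixes an end but no vertex; as a group fixing three or more ends also fixes the median vertex of three of them, $N$ then fixes either exactly one end, so that $G$ fixes that end, or exactly two ends, so that $G$ preserves the bi-infinite geodesic joining them—a proper subtree, since $T$ is not a line. Both conclusions contradict the geometric density of $G$, so $N$ must contain a hyperbolic element.

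With a hyperbolic element available, Lemma \ref{lem:min_tree} furnishes the unique minimal $N$-invariant subtree $T_N$, which is $G$-invariant by the first paragraph; geometric density of $G$ forces $T_N = T$, so every nonempty $N$-invariant subtree contains $T_N = T$ and $N$ stabilizes no proper subtree. Finally I would rule out a fixed end: if $E \neq \emptyset$, then, $E$ being $G$-invariant, the cases $|E| = 1$ and $|E| = 2$ again contradict geometric density of $G$ exactly as above, while $|E| \ge 3$ would make $N$ fix a vertex, forcing $F \supseteq T_N = T$ and hence $N$ trivial. Thus $N$ fixes no end and stabilizes no proper subtree, i.e.\ $N$ is geometrically dense.

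The conceptual core—that normalization upgrades $N$-invariance to $G$-invariance—is immediate; I expect the only fiddly points to be the two geometric facts invoked repeatedly, namely that a group fixing at least three ends fixes a vertex and that fixing exactly two ends produces a $G$-invariant proper subtree (the axis between them). Neither is deep, but both must be stated carefully to keep the case analysis airtight.
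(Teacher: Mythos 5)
Your proof is correct, and its skeleton coincides with the paper's: normalization upgrades $N$-invariance to $G$-invariance, Corollary \ref{cor:pe} yields a hyperbolic element of $N$, and the uniqueness in Lemma \ref{lem:min_tree} together with geometric density of $G$ forces the minimal $N$-invariant subtree to equal $T$. The differences are in the assembly and in one sub-case. The paper first proves that $N$ fixes no end and no vertex, and only afterwards extracts hyperbolic elements; when the set $A$ of $N$-fixed ends has more than one point, it takes the convex hull $Y$ of $A$ inside $T$, concludes $Y=T$ from geometric density of $G$, and deduces that $A$ is dense and closed, hence $A=\partial T$, forcing $N$ to be trivial. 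You instead extract the hyperbolic element first and dispose of the case of three or more fixed ends with the combinatorial fact that a group fixing three ends fixes the median vertex where the three pairwise geodesics meet; in your final step this feeds into $F\supseteq T_N=T$, again forcing $N=1$. Both routes are sound: the paper's closed-plus-dense argument treats all cases $|A|\ge 2$ uniformly and avoids your case split, while your version stays purely combinatorial (no topology on $\partial T$ is needed) at the cost of having to state the median-vertex fact carefully, as you yourself note.
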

\begin{proof}
We start with the boundary. 
Let $A := \{\xi \in \partial T \ | \ n \xi = \xi \ \forall n \in N \} = X(N) \cap \partial T$.
We claim that $A$ is empty.
Clearly $A$ is closed and, since $N$ is normalized by $G$, the collection of its fixed points is stabilized by $G$. 
If $A$ consists of one point then this point is fixed by $G$, contradicting geometric density. 
If $|A| > 1$ let $Y$ be the convex hull of $A$ inside the tree $T$.
This is a non-empty $G$-invariant subtree of $T$ and, by the geometric density of $G$, we have $Y = T$.
This readily implies that $A$ is dense and hence $A = \partial T$, contradicting our assumption that $N$ is nontrivial.
Thus $N$ has no fixed point on the boundary.
A very similar argument shows that $N$ does not fix any point inside the tree. 

Since $N$ does not admit global fixed points in $T \cup \partial T$, Corollary \ref{cor:pe} implies that $N$ contains hyperbolic elements.
Thus, by Lemma \ref{lem:min_tree}, there is a unique minimal $N$-invariant subtree $Y \subset T$.
This unique tree is stabilized by $G$.
Since the latter is geometrically dense we actually have $Y = T$, which proves the theorem. 
\end{proof}

\section{Independence properties and Tits' simplicity theorem}\label{sec:Tits}
For any finite or (bi-)infinite path $C$ in $T$ and any $k \in\mathbb{N}$ 
 let $C^k$ denote the $k$-neighborhood of $C$ (i.e.\ the subtree of $T$ spanned by all vertices at distance at most $k$ from $C$). 
 Denote by $\pi_C: T \arrow C$ the nearest point projection on $C$ and let $T_x := \pi_C^{-1}(x) = \{z \in T \ | \forall y \in C: \ d(x,z) \le d(y,z) \}$.
 If $G\leq\Aut(T)$, then, for each vertex $x$ of $C$, 
 the pointwise stabilizer $\Fix_G(C^{k-1})$ acts on $T_x$. 
 Denoting by $F_x$ the permutation group induced by restricting $\mathrm{Fix}_G(C^{k-1})$ to $T_x$,
we obtain a map 
$$\Phi:\Fix_G(C^{k-1}) \rightarrow \prod_{x\in C} F_x$$ which is clearly an injective homomorphism.

 \begin{figure}[ht]\label{fig:Pk-def}
\centering
\def\svgwidth{\textwidth}
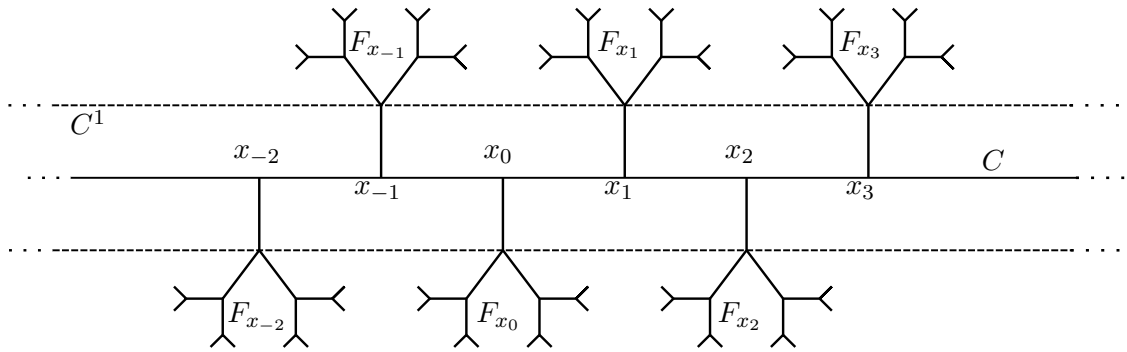
\caption{The restrictions $F_{x_i}$ of $\Fix_G(C^{1})$ to the subtrees $T_{x_i}$.}
\end{figure}
 
\begin{definition}
 We say that $G$ satisfies \emph{Property $P_k$} if for every finite or (bi-)infinite path $C$ in $T$ 
 the map $\Phi:\Fix_G(C^{k-1}) \rightarrow \prod_{x\in C} F_x$ defined above is an isomorphism.
 \end{definition}
 Notice that when $k=1$ we recover the original Property $P$ defined by Tits (\cite{T70}) so we
sometimes omit the subscript when referring to Property $P_1$.
We remark that Property $P$ is also known as Tit's Independence Property in the literature, 
because it ensures that the actions on subtrees rooted at a path can be chosen independently from each other.

To find simple subgroups of $\Aut(T)$ we will use a generalization of Tits' theorem (\cite[Th{\'e}or{\`e}me 4.5]{T70}). 

\begin{definition}
Let $$G^{+_k}:=\langle \Fix_G(e^{k-1})\mid e\in E\rangle$$
denote the subgroup of $G$ generated by pointwise stabilizers of ``$(k-1)$-thick'' edges.
In particular, $G^+:=G^{+_1}$ is generated by pointwise stabilizers of edges.
\end{definition}

\begin{theorem}\label{Tits-k}
 Suppose that $G\leq \mathrm{Aut}(T)$ is geometrically dense and satisfies Property $P_k$. 
 Then $G^{+_k}$ is either simple or trivial.
\end{theorem}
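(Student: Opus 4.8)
The plan is to adapt Tits' original argument to the thickened setting and prove that every nontrivial normal subgroup $N\trianglelefteq G^{+_k}$ coincides with $G^{+_k}$; equivalently, that $G^{+_k}$ has no proper nontrivial normal subgroup. First I would record that $G^{+_k}$ is normal in $G$: for $g\in G$ one has $g\,\Fix_G(e^{k-1})\,g^{-1}=\Fix_G((ge)^{k-1})$, so conjugation by $G$ permutes the generating fixators. If $G^{+_k}$ is trivial there is nothing to prove; otherwise it is a nontrivial normal subgroup of the geometrically dense group $G$, so Lemma~\ref{lem:gd_normal} makes $G^{+_k}$ geometrically dense, and a second application to $N\trianglelefteq G^{+_k}$ makes $N$ geometrically dense. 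By Lemma~\ref{lem:gd_hyp} the group $N$ then contains hyperbolic elements, indeed a hyperbolic element whose axis lies in any prescribed half-tree.

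Fix an edge $e=(x_0,x_1)$; the target is $\Fix_G(e^{k-1})\subseteq N$. Applying Lemma~\ref{lem:gd_hyp} to the two half-trees determined by $e$ produces hyperbolic elements of $N$ with axes on either side of $e$, and a ping-pong argument with high powers of these yields a hyperbolic $h\in N$ whose axis $C$ passes through $e$; replacing $h$ by a power I may take its translation length $\ell(h)$ as large as convenient. Since $e\subseteq C$ we have $e^{k-1}\subseteq C^{k-1}$, so $\Fix_G(C^{k-1})\subseteq\Fix_G(e^{k-1})\subseteq G^{+_k}$, and Property~$P_k$ identifies $\Fix_G(C^{k-1})$ with the unrestricted product $\prod_{x\in C}F_x$. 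For any $g$ in this fixator the commutator $[g,h]=ghg^{-1}h^{-1}$ again fixes $C^{k-1}$ and, because $h\in N\trianglelefteq G^{+_k}$, lies in $N$; in product coordinates the translation $h$ acts by the index shift $x\mapsto hx$, so $[g,h]$ has $x$-coordinate $\theta_x(g_{h^{-1}x})\,g_x^{-1}$ for the transport isomorphisms $\theta_x$ induced by $h$.

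The heart of the proof is to turn this shift into surjectivity. Telescoping along each shift-orbit, and using that $P_k$ provides infinitely supported coordinates, first realizes inside $N$ every element of $\Fix_G(C^{k-1})$ supported on a single subtree $T_x$. To reach the elements of $\Fix_G(e^{k-1})$ that also move the axis, I would exploit that $h$ contracts the forward half-tree $W=T_{x_1}$ into itself, $hW\subsetneq W$, with iterates shrinking toward the attracting end $a_h$: writing $\sigma$ for the endomorphism of $F_{x_1}$ given by conjugation by $h$ followed by extension by the identity, an arbitrary $c\in F_{x_1}$ is recovered from the single commutator built on $\beta=\prod_{j\ge0}\sigma^j(c)$ via the telescoping identity $\beta\,\sigma(\beta)^{-1}=c$. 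The symmetric computation with $h^{-1}$ treats $F_{x_0}$, so that $\Fix_G(e^{k-1})=F_{x_0}\times F_{x_1}\subseteq N$. As $e$ was arbitrary and $G^{+_k}=\langle\Fix_G(e^{k-1})\mid e\in E\rangle$, this gives $N\supseteq G^{+_k}$, hence $N=G^{+_k}$.

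I expect the main obstacle to be exactly this surjectivity step—capturing the axis-moving part of $\Fix_G(e^{k-1})$, not merely the axis-fixing part—together with the point that the telescoped element $\beta$ must itself be an element of $G$ rather than of $\Aut(T)$ only. Both are governed by the unrestricted-product form of Property~$P_k$, which guarantees that the independently prescribed, possibly infinitely supported, local data assemble into a genuine element of $\Fix_G(C^{k-1})$; the hyperbolic translation supplied by geometric density is what makes the telescoping converge. Everything else is bookkeeping with the classification of automorphisms and the lemmas of Section~\ref{generalities}.
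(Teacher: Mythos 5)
Your proof has the same skeleton as the paper's---$G^{+_k}\lhd G$, two applications of Lemma~\ref{lem:gd_normal} to make $N$ geometrically dense, Lemma~\ref{lem:gd_hyp} to produce hyperbolic elements of $N$, and then a commutator equation solved inside the fixator of a thickened path using Property $P_k$---but there is a genuine gap exactly at the step you flag as the crux, and your appeal to $P_k$ does not close it. Because you insist that the axis $C$ of $h\in N$ pass \emph{through} $e$, the factor $F_{x_1}$ of $\Fix_G(e^{k-1})=F_{x_0}\times F_{x_1}$ consists of elements that move $C$ inside the half-tree $W=T_{x_1}$; such elements do not lie in $\Fix_G(C^{k-1})$, so they are not coordinates of the $P_k$-product along $C$, and your telescoping element $\beta=\prod_{j\ge 0}\sigma^j(c)$ (with $\sigma^j(c)=h^jch^{-j}$) is an \emph{infinite} product of group elements supported on the nested half-trees $W\supset hW\supset h^2W\supset\cdots$. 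These supports are not the pairwise disjoint fibers $\pi_{C'}^{-1}(x)$ of any path $C'$ (indeed $\beta$ fixes no thickened path that enters $W$), so Property $P_k$---which assembles one coordinate per fiber of a path, and nothing else---says nothing about $\beta$. What the construction actually gives is that $\beta$ is a pointwise limit of the partial products $\beta_n\in G$, hence $\beta\in\overline{G}$; but the theorem does not assume $G$ closed, and you need $\beta\in G^{+_k}$ (not merely $\beta\in\Aut(T)$ or $\overline{G}$) in order for normality of $N$ in $G^{+_k}$ to yield $c=[\beta,h]\in N$. So the identity $\beta\,\sigma(\beta)^{-1}=c$ is formally correct, but its left-hand side involves an element you cannot certify to lie in the group.

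The paper avoids this entirely by a different geometric choice, which is available thanks to the strong form of Lemma~\ref{lem:gd_hyp}: take the hyperbolic $n\in N$ so that the whole $k$-neighbourhood of its axis $C$ lies in \emph{one} half-tree determined by $e$, and aim only at the factor of $\Fix_G(e^{k-1})$ supported on the \emph{other} half-tree (the two factors are then swapped by symmetry). Any such target $\phi$ fixes $C^{k-1}$, hence lies in $\Fix_G(C^{k-1})$, and its support sits inside the single fiber $T_0=\pi_C^{-1}(c_0)$ with $c_0=\pi_C(e)$, so its $P_k$-coordinates are $\phi_i=\id$ for $i\neq 0$. The equation $[n,\psi]=\phi$ is then solved coordinatewise: prescribe $\psi_0,\dots,\psi_{\ell-1}$ arbitrarily and determine all remaining $\psi_i$ recursively; since each $\psi_i$ lies in the fiber group $F_i$, the surjectivity of $\Phi$ (this \emph{is} Property $P_k$ for the path $C$) guarantees that the tuple $(\psi_i)_i$ is a genuine element $\psi\in\Fix_G(C^{k-1})\subseteq G^{+_k}$---no limits, no closedness hypothesis. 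In short: infinitely supported data are legitimate only when they sit in the fibers of a path, and the way to arrange that is to keep the axis out of the half-tree supporting your target element. If you make that choice of axis, your ``first telescoping step'' already finishes the proof and the problematic construction of $\beta$ becomes unnecessary.
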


\begin{proof}
Figure \ref{fig:simplicity} illustrates the proof. 
\begin{figure}[ht] 
\centering \def\svgwidth{300pt}
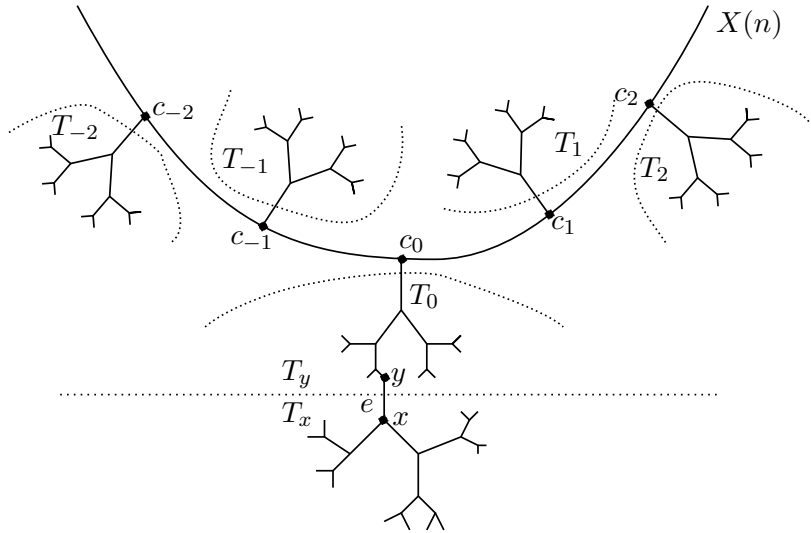 
\caption{Illustration of Tits' simplicity proof.}
\label{fig:simplicity}
\end{figure}

Write $H := G^{+_k}$. 
We assume that $H$ is nontrivial as otherwise there is nothing to prove.
Let $N\ne 1$ be normalized by $H$. 
We will show that $N \geq H$, thus proving the theorem.
Since $H$ is generated by the pointwise stabilizers $G_e(k) := \mathrm{Fix}_G(e^{k-1})$, 
it is enough to fix an edge $e \in E(T)$ and show that $N \geq G_e(k)$. 
Let $x,y$ be the vertices incident with $e$.
Writing $F := G_e(k)$, by Property $P_k$, there is a decomposition $F = F_x \times F_y$.
By the symmetry of the situation it is enough to show that $F_y \leq N$. 

By Lemma \ref{lem:gd_normal}  $H \lhd G$ is geometrically dense.
A second application of the same lemma shows that $N$ is geometrically dense too.
Lemma \ref{lem:gd_hyp} shows that there exists a hyperbolic element $n \in N$ with an axis whose $k$-neighbourhood is completely contained in the half-tree $T_x$.
We denote the axis by $C = X(n) =  (\ldots c_{-1}, c_{0},c_{1},c_{2} \ldots )$ with $n c_i = c_{i+\ell}$ where $\ell = \ell(n)$ is the translation length.
Assume that $c_0 = \pi_C(e)$ is the projection of the edge $e$ on this axis. 

We have to show that a given element $\phi \in F_y$ is contained in $N$.
Applying Property $P_k$ to the axis $C$ we obtain $\phi = \ldots \phi_{-1} \phi_0 \phi_1\phi_2 \ldots = \phi_0$. 
In the last equality we used the fact that $T_i = \pi_C^{-1}(c_i) \subset T_y$ for every $i \ne 0$ which implies that $\phi_i = \id$.
Since $N$ is normalized by $H$,  we have $[n,\psi] = n \psi n^{-1} \psi^{-1} \in N$ for all $\psi \in H$. 
To prove that $\phi \in N$ it is enough to exhibit some $\psi \in H$ such that $\phi = [n, \psi]$.
We will in fact find such an element in $\Fix_G(C^{k-1})$. 
If $\psi \in \Fix_{G}(C^{k-1})$ is an element we denote by $\psi_i = \psi_{T_i}$ its restriction to $T_i$. 
By property $P_k$ we have the freedom to construct $\psi$ by prescribing each element $\psi_i$ separately. 

With this notation we have $[n,\psi]_i = (n \psi n^{-1} \psi^{-1})_i = n|_{i-\ell} \circ \psi_{i-\ell} \circ (n^{-1})|_{i} \circ (\psi^{-1})_i$.
Solving for $[n,\psi] = \phi$ we obtain two equations
\begin{eqnarray*}
\psi_i & = & (\phi_i)^{-1} n|_{i-\ell} \circ \psi_{i-\ell} \circ (n|_{i-\ell})^{-1} \\
\psi_{i} & = & (n|_{i})^{-1} \circ \phi_{i+\ell} \circ \psi_{i+\ell} \circ n|_{i} 
\end{eqnarray*}
where for the second one we shifted all indices by $\ell$. 
Now assuming we have arbitrarily fixed the values of $\psi_0,\psi_1,\ldots,\psi_{\ell-1}$ we can now solve recursively for all other values of $\psi_{i}$ using the first of the above equations for the positive values of $i$ and the second one for negative values of $i$.
This method enables us to realize any element $\phi \in \Fix_G(C^{k-1})$ as a commutator of the form $[\psi,n]$.
Note that in our specific case we have $\phi_i = \id$ for all $i \ne 0$. This completes the proof. 
\end{proof}

\section{Universal Groups}\label{sec:universal}

In this section we introduce the universal group construction that was developed and studied by M. Burger and S. Mozes in \cite{BM00a}, 
and later on played an important role in the study of lattices in the product of the automorphism groups of two regular trees, see \cite{BM00b}.


Let $T_{d}=(V,E)$ be the $d$-regular tree ($d\in\mathbb{N},\ d\ge 3$). 
Recall that $E(x)$ denotes the set of edges with origin $x\in V$.
Further, let $l:E\to\{1,\ldots,d\}$ be a legal labelling of $T_{d}$, i.e.\ for every $x\in V$ the map
\begin{displaymath}
  l_{x}:E(x)\to\{1,\ldots,d\},\ y\mapsto l(y)
\end{displaymath}
is a bijection, and $l(y)=l(\overline{y})$ for all $y\in E$. A ball of radius two in $T_{3}$ looks as follows:

\begin{figure}[ht]
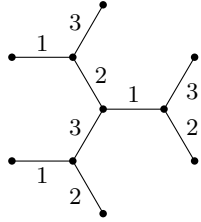

{\footnotesize
\begin{displaymath}
  \psset{unit=0.8cm}
  \psset{labelsep=2.5pt}
  \psset{linewidth=0.4pt}
  \pspicture(-2,-1.75)(2,2)
  \psdots[](0,0)
    \psline(0,0)(1,0)
    \psline(0,0)(-0.5,0.86)
    \psline(0,0)(-0.5,-0.86)
    \psdots[](1,0)
    \psdots[](-0.5,0.86)
    \psdots[](-0.5,-0.86)
    \uput[u](0.5,0){$1$}
    \uput[30](-0.25,0.43){$2$}
    \uput[150](-0.25,-0.43){$3$}
    \psline(1,0)(1.5,-0.86)
    \psline(1,0)(1.5,0.86)
    \psdots[](1.5,-0.86)
    \psdots[](1.5,0.86)
    \uput[30](1.25,-0.43){$2$}
    \uput[330](1.25,0.43){$3$}

    \psline(-0.5,0.86)(-1.5,0.86)
    \psline(-0.5,0.86)(0,1.73)
    \psdots[](-1.5,0.86)
    \psdots[](0,1.73)
    \uput[150](-0.25,1.30){$3$}
    \uput[u](-1,0.86){$1$}

    \psline(-0.5,-0.86)(-1.5,-0.86)
    \psline(-0.5,-0.86)(0,-1.73)
    \psdots[](-1.5,-0.86)
    \psdots[](0,-1.73)
    \uput[210](-0.25,-1.30){$2$}
    \uput[d](-1,-0.86){$1$}
  \endpspicture
\end{displaymath}}
\caption{A legally labelled ball of radius two in $T_{3}$.}
\end{figure}

Now, given $x\in V$, every automorphism $g\in\Aut(T_{d})$ induces a permutation at $x$ via the following map:
\begin{displaymath}
 c:\Aut(T_{d})\times V\to S_{d},\ (g,x)\mapsto l_{gx}\circ g\circ l_{x}^{-1}.
\end{displaymath}

\begin{definition}\label{def:universal_groups}
Let $F\le S_{d}$. Define $\mathrm{U}^{(l)}(F):=\{g\in\Aut(T_{d})\mid \forall x\in V:\ c(g,x)\in F\}$.
\end{definition}

It is immediate from the following cocycle property of the map $c$ that the sets introduced in Definition \ref{def:universal_groups} are in fact groups.

\begin{lemma}\label{lem:cocycle_identity}
Let $x\in V$ and $g,h\in\Aut(T_{d})$. Then $c(gh,x)=c(g,hx)c(h,x)$. \qed
\end{lemma}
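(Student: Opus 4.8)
The plan is to unwind the definition of the cocycle map $c$ and to cancel the labelling maps that meet in the middle. Before doing any algebra I would record the one structural fact the computation rests on: since $g\in\Aut(T_{d})$ sends the vertex $x$ to $gx$, it carries the edges incident to $x$ bijectively onto the edges incident to $gx$, so the restriction $g\colon E(x)\to E(gx)$ is well defined and the composite $l_{gx}\circ g\circ l_{x}^{-1}$ is a genuine permutation of $\{1,\ldots,d\}$. The same holds with $(g,x)$ replaced by $(h,x)$ or by $(g,hx)$. This is really the only point demanding attention, and it amounts to checking that domains and codomains line up rather than to any substantive argument.

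With that in place I would simply expand the right-hand side:
\[
c(g,hx)\,c(h,x)=\bigl(l_{g(hx)}\circ g\circ l_{hx}^{-1}\bigr)\circ\bigl(l_{hx}\circ h\circ l_{x}^{-1}\bigr).
\]
Reading the composition from right to left, the two innermost maps $l_{hx}^{-1}$ and $l_{hx}$ compose to the identity on $E(hx)$ and cancel, leaving $l_{g(hx)}\circ g\circ h\circ l_{x}^{-1}$.

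Finally I would expand the left-hand side, using that the action is associative so that $(gh)x=g(hx)$:
\[
c(gh,x)=l_{(gh)x}\circ(gh)\circ l_{x}^{-1}=l_{g(hx)}\circ g\circ h\circ l_{x}^{-1},
\]
which is exactly the reduced right-hand side. Hence $c(gh,x)=c(g,hx)c(h,x)$, as claimed. The whole argument is a single cancellation once the labellings are evaluated at the correct vertices; the associativity $(gh)x=g(hx)$ is precisely what guarantees that they are, and so there is no genuine obstacle to overcome beyond this bookkeeping.
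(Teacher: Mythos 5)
Your proof is correct and is exactly the computation the paper has in mind: the lemma is stated with an immediate \qed because it follows from unwinding $c(g,x)=l_{gx}\circ g\circ l_{x}^{-1}$ and cancelling $l_{hx}^{-1}\circ l_{hx}$, which is precisely what you do. Your preliminary check that $g$ restricts to a bijection $E(x)\to E(gx)$ is the right point of care, and the rest is the intended bookkeeping.
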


The groups of Definition \ref{def:universal_groups} are termed \emph{universal groups} because of Proposition \ref{prop:universality} below. In words, they consist of those automorphisms of the regular tree which around every vertex act like one of the allowed permutations from $F$. 
To determine the dependence of $\mathrm{U}^{(l)}(F)$ on the labelling $l$, we record the following.

\begin{lemma}\label{lem:ext}
Given a quadruple $(l,l',b,b')$ consisting of legal labellings $l,l'$ of $T_{d}$ and vertices $b,b'\in V$, there is a unique automorphism $g\in\Aut(T_{d})$ with $gb=b'$ and $l'=l\circ g$.
\end{lemma}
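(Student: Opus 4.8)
The plan is to build $g$ one sphere at a time around $b$, constructing it and establishing its uniqueness simultaneously, the whole argument resting on the single fact that for every vertex $x$ the local maps $l_{x}$ and $l'_{x}$ are bijections onto $\{1,\ldots,d\}$.

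First I would dispose of uniqueness, since it dictates the construction. Any admissible $g$ satisfies $gb=b'$ by hypothesis. Assuming $g$ has been shown to be determined on the ball $B(b,n)$, fix $x\in S(b,n)$ and a neighbour $y$ of $x$ with $d(b,y)=n+1$. The requirement $l'=l\circ g$ forces $l(gx,gy)=l'(x,y)$, and since $l_{gx}\colon E(gx)\to\{1,\ldots,d\}$ is a bijection, this pins down $gy$ as the unique neighbour of $gx$ carrying the $l$-label $l'(x,y)$. Hence $g$ is determined on $B(b,n+1)$, and by induction on all of $V$.

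For existence I would simply run this recipe in reverse as a definition: set $gb:=b'$, and, given $g$ on $B(b,n)$ realised as a label-respecting isomorphism onto $B(b',n)$, extend it to $S(b,n+1)$ by sending the neighbour $y$ of $x\in S(b,n)$ with $l'(x,y)=i$ to the neighbour of $gx$ with $l$-label $i$. The one thing that genuinely needs checking --- and the step I expect to be the crux --- is self-consistency at the parent edge: among the $d$ neighbours of $x$ one is its parent $p\in S(b,n-1)$, whose image $gp$ is already fixed, so the extension must reproduce exactly that value. Here the legal-labelling symmetry $l(e)=l(\bar{e})$ is essential: putting $i:=l'(x,p)=l'(p,x)$, the inductive hypothesis at $p$ gives $l(gp,gx)=i$, whence $l(gx,gp)=i$, so the neighbour of $gx$ with $l$-label $i$ is indeed $gp$.

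With consistency in hand the remaining verifications are routine. The map $g$ is adjacency-preserving by construction, hence a graph morphism; performing the identical construction with the pairs $(l,b)$ and $(l',b')$ interchanged yields a two-sided inverse, so $g\in\Aut(T_{d})$. Finally $l'=l\circ g$ holds on every edge directed away from $b$ by fiat, and on the reversed edges by the symmetry of $l$ and $l'$, so it holds on all of $E$. An equivalent, more formulaic route would be to encode each vertex by the reduced word of labels read along its geodesic from the base point, giving coordinate bijections from $V$ onto the set of reduced label-words for $(l,b)$ and for $(l',b')$; then $g$ is the composite matching these two coordinate systems, and the induction above is precisely the proof that these coordinate maps are bijections.
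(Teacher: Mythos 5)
Your proof is correct and follows essentially the same route as the paper: an inductive, sphere-by-sphere determination of $g$ around $b$, forced by the bijectivity of the local labellings, which the paper compresses into the single formula $g|_{E(x)}:=l|_{E(gx)}^{-1}\circ l'|_{E(x)}$. Your version is more careful than the paper's --- in particular the consistency check at the parent edge via $l(e)=l(\bar{e})$, and the verification that $g$ is an automorphism, are points the paper leaves implicit --- but the underlying argument is the same.
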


\begin{proof}
By assumption, $gb=b'$. 
Now assume inductively that $g$ is uniquely determined on $B(b,n)$ $(n\in\mathbb{N}_{0})$ and let $x\in V$ be at distance $n$ from $b$.
Then $g$ is also uniquely determined on $E(x)$ by the requirement that $l'=l\circ g$, namely $g|_{E(x)}:=l|_{E(gx)}^{-1}\circ l'|_{E(x)}$.
\end{proof}

\begin{corollary}\label{cor:labellings}
Let $l$ and $l'$ be legal labellings of $T_{d}$. Further, let $F\le S_{d}$. Then $\smash{\mathrm{U}^{(l)}(F)}$ and $\smash{\mathrm{U}^{(l')}(F)}$ are conjugate in $\Aut(T_{d})$.
\end{corollary}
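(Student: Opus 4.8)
The plan is to write down an explicit conjugator using Lemma~\ref{lem:ext} and then track how the cocycle $c$ transforms under the change of labelling. The whole argument reduces to one identity, namely that $c$ computed with respect to $l'$ is the same as $c$ computed with respect to $l$ after conjugating by the automorphism supplied by Lemma~\ref{lem:ext}.

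First I would fix any vertex $b\in V$ and apply Lemma~\ref{lem:ext} to the quadruple $(l,l',b,b)$, obtaining the unique automorphism $g\in\Aut(T_{d})$ with $gb=b$ and $l'=l\circ g$. The relation $l'=l\circ g$ is the engine of the proof: read at a single vertex it says that for each $x\in V$ and each edge $e\in E(x)$ we have $l'(e)=l(ge)$, i.e.\ $l'_{x}=l_{gx}\circ g|_{E(x)}$ as maps $E(x)\to\{1,\ldots,d\}$. This is exactly the formula $g|_{E(x)}=l|_{E(gx)}^{-1}\circ l'|_{E(x)}$ appearing in the proof of Lemma~\ref{lem:ext}, and it is worth recording that here $g$ restricts to a bijection $E(x)\to E(gx)$.

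Next I would compute the cocycle $c'$ associated to $l'$ in terms of $c$. Writing $c'(h,x)=l'_{hx}\circ h\circ(l'_{x})^{-1}$ and substituting $l'_{x}=l_{gx}\circ g$ together with $l'_{hx}=l_{ghx}\circ g$, the intermediate factor becomes $g\circ h\circ g^{-1}$, which is precisely the local action of $ghg^{-1}$ at the vertex $gx$. Thus the three occurrences of $g^{\pm1}$ telescope into a single conjugation and one obtains
$$c'(h,x)=l_{ghx}\circ(ghg^{-1})\circ l_{gx}^{-1}=c(ghg^{-1},gx).$$
Finally I would read off the conjugacy. Since $g$ permutes $V$, as $x$ ranges over $V$ so does $gx$; hence $h\in\mathrm{U}^{(l')}(F)$, i.e.\ $c'(h,x)\in F$ for all $x$, holds if and only if $c(ghg^{-1},y)\in F$ for all $y\in V$, i.e.\ $ghg^{-1}\in\mathrm{U}^{(l)}(F)$. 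Therefore $\mathrm{U}^{(l')}(F)=g^{-1}\,\mathrm{U}^{(l)}(F)\,g$, which is the assertion.

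The only point requiring care is the middle step: one must keep straight that $g$ carries $E(x)$ bijectively onto $E(gx)$ so that the restrictions compose in the right order, and verify that $g\circ h\circ g^{-1}$, restricted to $E(gx)$, really is the restriction of $ghg^{-1}$ there. This is bookkeeping of exactly the kind made transparent by the cocycle identity of Lemma~\ref{lem:cocycle_identity}, so I expect no genuine obstacle beyond keeping the domains and codomains of the label maps aligned.
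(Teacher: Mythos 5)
Your proposal is correct and follows essentially the same route as the paper: both apply Lemma~\ref{lem:ext} to the quadruple $(l,l',b,b)$ and conjugate by the resulting automorphism, your identity $\mathrm{U}^{(l')}(F)=g^{-1}\,\mathrm{U}^{(l)}(F)\,g$ being exactly the paper's $\mathrm{U}^{(l)}(F)=\tau\,\mathrm{U}^{(l')}(F)\,\tau^{-1}$. The only difference is that you spell out the cocycle computation $c'(h,x)=c(ghg^{-1},gx)$ that the paper leaves implicit, which is a welcome addition rather than a deviation.
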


\begin{proof}
Choose $b\in V$. If $\tau\in\Aut(T_{d})$ is the automorphism of $T_{d}$ associated to $(l,l',b,b)$ by Lemma \ref{lem:ext}, then  $\smash{\mathrm{U}^{(l)}(F)=\tau\mathrm{U}^{(l')}(F)\tau^{-1}}$.
\end{proof}

With Corollary \ref{cor:labellings} in mind, we henceforth omit the reference to an explicit labelling.

\begin{example}\label{ex:uf}
Clearly, $\mathrm{U}(S_{d})=\Aut(T_{d})$. 
On the other hand, $\mathrm{U}(\{\mathrm{id}\})\cong\mathbb{Z}/2\mathbb{Z}\ast\cdots\ast\mathbb{Z}/2\mathbb{Z}$ where the number of copies of $\mathbb{Z}/2\mathbb{Z}$ is $d$.
To see this, fix $b\in V$ and for $i\in\{1,\ldots,d\}$ denote by $e_{i}\in E$ the edge with $o(e_{i})=b$ and $l(e_{i})=i$.
Further, let $\sigma_{i}\in\mathrm{U}(\{\mathrm{id}\})$ denote the unique label-respecting inversion of the edge $e_{i}$, which is associated to $(l,l,o(e_{i}),t(e_{i}))$ by Lemma \ref{lem:ext}.
Then the subgroups $\langle\sigma_{1}\rangle,\ldots,\langle\sigma_{d}\rangle$ generate the asserted free product within $\mathrm{U}(\{\mathrm{id}\})$ by an application of the ping-pong lemma.
Finally, every $\alpha\in\mathrm{U}(\{\mathrm{id}\})$ is the unique automorphism of $T_{d}$ associated to $(l,l,b,\alpha(b))$ by Lemma \ref{lem:ext}
which can be realized as an element of $\langle\sigma_{1}\rangle\ast\cdots\ast\langle\sigma_{d}\rangle$ 
by composing the inversions along the edges that occur in the unique reduced path from $b$ to $\alpha(b)$.
\end{example}

Lemma \ref{lem:ext} also plays an important role in proving the following list of basic properties of the groups $\mathrm{U}(F)$. 

\begin{proposition}\label{prop:uf_properties}
Let $F\le S_{d}$. Then the following statements hold.
\begin{itemize}
 \item[(i)] $\mathrm{U}(F)$ is closed in $\mathrm{Aut}(T_{d})$.
 \item[(ii)] $\mathrm{U}(F)$ is locally permutation isomorphic to $F$.
 \item[(iii)] $\mathrm{U}(F)$ is vertex-transitive.
 \item[(iv)] $\mathrm{U}(F)$ is edge-transitive if and only if the action $F\curvearrowright\{1,\ldots,d\}$ is transitive.
 \item[(v)] $\mathrm{U}(F)$ is discrete in $\Aut(T_{d})$ if and only if the action $F\curvearrowright\{1,\ldots,d\}$ is free.
\end{itemize}
\end{proposition}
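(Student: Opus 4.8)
The plan is to establish the five items in turn, all powered by a single construction principle coming from Lemma~\ref{lem:ext}: an automorphism of $T_{d}$ is determined by choosing the image of a base vertex and then prescribing, working outward, the local permutation it induces at each vertex (subject only to compatibility along the edge pointing back toward the base). Because $\id\in F$ and $F$ is a group, any such recipe in which every prescribed local permutation lies in $F$ produces an element of $\mathrm{U}(F)$. The cleanest instance I will use repeatedly is the \emph{constant} automorphism: for $\sigma\in F$ and a chosen vertex $b$, the recipe $gb=b$ with $c(g,x)=\sigma$ for every $x$ is compatible (the back-edge condition $\sigma(a)=\sigma(a)$ is automatic) and yields $g\in\mathrm{U}(F)$.

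For (i) I would write $\mathrm{U}(F)=\bigcap_{x\in V}\{g\mid c(g,x)\in F\}$ and observe that $c(g,x)$ depends only on the restriction of $g$ to $B(x,1)$; hence each set in the intersection is a union of basic open sets $\mathcal{U}(g,B(x,1))$, so it is clopen and $\mathrm{U}(F)$ is closed. For (iii), given $b,b'$ I would feed the quadruple $(l,l,b,b')$ into Lemma~\ref{lem:ext} to get the unique label-preserving $g$ with $gb=b'$; since $l\circ g=l$ forces $c(g,x)=\id\in F$ at every vertex, $g\in\mathrm{U}(F)$, giving vertex-transitivity.

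Item (ii) is the first place the construction principle does real work. The inclusion $c(\St_{\mathrm{U}(F)}(x),x)\subseteq F$ is the definition; for the reverse inclusion I would realize an arbitrary $\sigma\in F$ as $c(g,x)$ using the constant automorphism fixing $x$. Item (iv) then follows by combining (ii), (iii) and transitivity: in one direction I map the origin of one geometric edge to that of the other by (iii) and then match labels by a local permutation from $F$, using (ii); in the other direction I take $e_{1},e_{2}\in E(x)$ with labels $i,j$ and an element $g$ carrying the geometric edge of $e_{1}$ to that of $e_{2}$, and a two-case analysis (whether $ge_{1}=e_{2}$ or $ge_{1}=\bar{e}_{2}$), together with $l(\bar{e})=l(e)$, shows $c(g,x)\in F$ sends $i$ to $j$, so $F$ is transitive.

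The subtlest item, and the main obstacle, is (v). For ``free $\Rightarrow$ discrete'' I would prove $\Fix_{\mathrm{U}(F)}(B(x,1))=\{\id\}$: such a $g$ has $c(g,x)=\id$, and at each vertex $y$ it fixes the edge pointing back toward $x$, so freeness forces $c(g,y)=\id$; an induction on $d(x,\cdot)$ then gives $g=\id$, so the identity is isolated. For the converse I would start from some $\sigma\in F\setminus\{\id\}$ fixing a label $i$ and, for each $n$, manufacture a nontrivial element fixing $B(x,n)$: pick a vertex $y$ with $d(x,y)>n$ whose edge toward $x$ carries label $i$, set $g$ to be the identity on the half-tree containing $x$ and the constant $\sigma$-automorphism on the complementary half-tree $T_{y}$ rooted at $y$. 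Here the hypothesis $\sigma(i)=i$ is exactly what guarantees that $g$ fixes the separating edge and therefore glues to a genuine automorphism of $T$, one that fixes $B(x,n)$ yet is nontrivial because $\sigma$ moves some label; as $n$ grows this witnesses non-discreteness. The recurring delicate point throughout---and the one I would check most carefully---is precisely this: that a prescribed assignment of local permutations does define an honest automorphism, which for the half-tree construction in (v) hinges on the fixed label $i$.
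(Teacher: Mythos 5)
Your proposal is correct and takes essentially the same approach as the paper: both rest on Lemma~\ref{lem:ext}-style prescription of local permutations, using the constant automorphism $(l,a\circ l,b,b)$ for (ii), the label-preserving automorphism $(l,l,b,b')$ for (iii), composition of (ii) and (iii) for (iv), and for (v) the gluing of the identity with a constant-$\sigma$ action on a half-tree whose separating edge carries a label fixed by $\sigma$. The only differences are cosmetic: your two-case analysis in the converse of (iv) treats edge-inversions explicitly where the paper is terser, and you argue ``free $\Rightarrow$ discrete'' directly via $\Fix_{\mathrm{U}(F)}(B(x,1))=\{\id\}$ rather than by the paper's contrapositive.
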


Proposition \ref{prop:uf_properties} illustrates the principle that properties of $\mathrm{U}(F)$ correspond to properties of $F$, which constitutes part of the beauty of the universal group construction.

\begin{proof}
For (i), suppose that $g\in\Aut(T_{d})\setminus\mathrm{U}(F)$. Then $c(g,x)\not\in F$ for some $x\in V$ and hence the open neighbourhood $\{h\in\Aut(T_{d})\mid h|_{B(x,1)}=g|_{B(x,1)}\}$ of $g$ is also contained in the complement of $\mathrm{U}(F)$ in $\Aut(T_{d})$.

For (ii), let $b\in V$ and $a\in F$. Further, let $\alpha\in\Aut(T_{d})$ be the automorphism associated to $(l,a\circ l,b,b)$. 
Then $c(\alpha,x)=a$ for every $x\in V$ and hence $\alpha\in\St_{\mathrm{U}(F)}(b)$ realizes the permutation $a$ at the vertex $b$.

For part (iii), let $b,b'\in V$ and let $g\in\Aut(T_{d})$ be the automorphism of $T_{d}$ associated to $(l,l,b,b')$ by Lemma \ref{lem:ext}. Then $g\in\mathrm{U}_{k}(F)$ as $c(g,x)=\id\in F$ for all $x\in V$.

As to (iv), suppose that $F$ is transitive. Given $e,e'\in E$, choose $\alpha'\in\mathrm{U}(F)$ such that $\alpha'o(e)=o(e')$ by (iii). Then pick $\alpha''\in\mathrm{U}(F)_{o(e')}$ such that $\alpha''(\alpha'e)=e'$, by (ii) and transitivity of $F$, and set $\alpha:=\alpha''\circ\alpha'$.

Conversely, if $\mathrm{U}(F)$ is edge-transitive then $\mathrm{U}(F)_{x}$ acts transitively on $E(x)$ for a given vertex $x\in V$ and hence $F$ is transitive by (ii).

For part (v), fix $b\in V$ and suppose that the action $F\curvearrowright\{1,\ldots,d\}$ is not free, say $a\in F-\{e\}$ fixes $i\in\{1,\ldots,d\}$. For every $n\in\mathbb{N}$ we define $\alpha\in\mathrm{U}(F)$ such that $\alpha|_{B(b,n)}=\id$ but $\alpha|_{B(b,n+1)}\neq\id$ as follows: Set $\alpha|_{B(b,n)}:=\id$ and let $e\in E$ be an edge with $o(e)\in S(b,n)$, $t(e)\in S(b,n-1)$ and $l(e)=i$. Then we may extend $\alpha$ to $B(b,n+1)$ as desired by setting $\alpha|_{E(x)}:=l_{x}^{-1}\circ a\circ l_{x}$ and $\alpha|_{E(x')}:=\id$ for all $x'\in S(b,n)-x$. Now, inductively extend $\alpha$ to $T_{d}$ such that $c(\alpha,x')=a$ whenever $x'\in V$ maps to $x$ under the projection onto $B(x,n)$ and $c(\alpha,x')=\id$ otherwise.

Conversely, assume that $\mathrm{U}(F)$ is non-discrete. Then there are $\alpha\in\mathrm{U}(F)$ and $n\in\mathbb{N}$ such that $\alpha|_{B(b,n)}=\id$ but $\alpha|_{B(b,n+1)}\neq\id$. Hence there is $x\in S(b,n)$ such that $c(\alpha,x)\in F$ is non-trivial and fixes a point.
\end{proof}

For the sake of clarity we extract the following statement from Proposition \ref{prop:uf_properties}.

\begin{proposition}
Let $F\le S_{d}$. Then $\mathrm{U}(F)$ is compactly generated, totally disconnected, locally compact Hausdorff. It is discrete if and only if the action $F\curvearrowright\{1,\ldots,d\}$ is free.
\end{proposition}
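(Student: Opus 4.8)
The plan is to derive every assertion from Proposition~\ref{prop:uf_properties} together with the observation, made at the end of Section~\ref{generalities}, that $\Aut(T_{d})$ is a t.d.l.c.\ Hausdorff group since the $d$-regular tree $T_{d}$ is locally finite. The final sentence, giving the discreteness criterion, is precisely part~(v) of Proposition~\ref{prop:uf_properties}, so it requires no further argument.

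For the three topological adjectives I would argue that $\mathrm{U}(F)$ inherits them from $\Aut(T_{d})$ as a closed subgroup. By Proposition~\ref{prop:uf_properties}(i), $\mathrm{U}(F)$ is closed in $\Aut(T_{d})$. Being a subspace of a Hausdorff, totally disconnected space, it is itself Hausdorff and totally disconnected; and a closed subgroup of a locally compact group is locally compact. Alternatively, I can exhibit local compactness directly: for a fixed vertex $b\in V$ the stabilizer $\St_{\mathrm{U}(F)}(b)=\mathrm{U}(F)\cap\St_{\Aut(T_{d})}(b)$ is closed in the compact open subgroup $\St_{\Aut(T_{d})}(b)$, hence is itself a compact open subgroup of $\mathrm{U}(F)$, providing a compact neighbourhood of the identity.

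The one genuinely new point is compact generation, which I would establish using vertex-transitivity (part~(iii)) and the compactness of $K:=\St_{\mathrm{U}(F)}(b)$ just noted. For each of the $d$ neighbours $b'$ of $b$, use part~(iii) to choose $g_{b'}\in\mathrm{U}(F)$ with $g_{b'}b=b'$, and set $\Omega:=K\cup\{g_{b'}\mid b'\sim b\}$, a compact subset of $\mathrm{U}(F)$. Writing $H:=\langle\Omega\rangle$, I would first show that $H$ acts transitively on $V$ by induction on $d(b,v)$: given $v$ with $d(b,v)=n+1$, let $b'$ be the neighbour of $v$ on the geodesic $[b,v]$, so $d(b,b')=n$ and, by induction, there is $h\in H$ with $hb=b'$; then $w:=h^{-1}v$ is a neighbour of $b$, so $g_{w}\in\Omega$ and $hg_{w}\in H$ maps $b$ to $v$. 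Once $H$ is vertex-transitive, any $g\in\mathrm{U}(F)$ admits some $h\in H$ with $hb=gb$, whence $h^{-1}g\in\St_{\mathrm{U}(F)}(b)=K\subseteq H$ and thus $g\in H$; therefore $H=\mathrm{U}(F)$ and $\Omega$ is a compact generating set.

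I do not anticipate a serious obstacle: the topological properties transfer formally once closedness is invoked, and discreteness is quoted verbatim. The only step demanding care is the transitivity induction, where one must check that the finitely many translations $g_{b'}$, combined with the full stabilizer $K$, already reach every vertex and hence recover all of $\mathrm{U}(F)$.
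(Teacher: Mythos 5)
Your proof is correct, and its overall architecture matches the paper's: the three topological adjectives are obtained by citing closedness in $\Aut(T_{d})$ (Proposition \ref{prop:uf_properties}(i)), discreteness is quoted verbatim from part (v), and compact generation comes from the decomposition ``compact vertex stabilizer together with finitely many elements moving $b$''. The one place where you genuinely diverge is in how the vertex-moving part of the generating set is chosen and justified. The paper takes the $d$ label-respecting edge-inversions $\sigma_{1},\ldots,\sigma_{d}$ of Example \ref{ex:uf} and uses the fact, already established there, that $\langle\sigma_{1}\rangle\ast\cdots\ast\langle\sigma_{d}\rangle=\mathrm{U}(\{\mathrm{id}\})$ is vertex-transitive; given $\alpha\in\mathrm{U}(F)$ one picks $\beta\in\mathrm{U}(\{\mathrm{id}\})$ with $\beta(\alpha b)=b$ and concludes immediately, with no induction. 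You instead take arbitrary witnesses $g_{b'}$ of vertex-transitivity (part (iii)), one per neighbour of $b$, and prove by induction on $d(b,v)$ that $\langle K\cup\{g_{b'}\}\rangle$ is vertex-transitive before running the same final step. The paper's route is shorter because it recycles Example \ref{ex:uf}, and it yields a more structured generating set (the non-compact part generates a discrete free product of copies of $\mathbb{Z}/2\mathbb{Z}$ inside $\mathrm{U}(F)$). Your route is self-contained modulo Proposition \ref{prop:uf_properties} and proves something more general along the way: any closed vertex-transitive subgroup of the automorphism group of a locally finite tree is compactly generated, with no reference to labellings or to the universal-group construction at all.
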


\begin{proof}
The group $\mathrm{U}(F)$ is totally disconnected, locally compact Hausdorff as a closed subgroup of $\Aut(T_{d})$. Furthermore, $\mathrm{U}(F)$ is generated by the compact set $\mathrm{U}(F)_{b}\cup\{\sigma_{1},\ldots,\sigma_{d}\}$ where $b\in V$ is a fixed vertex and $\sigma_{i}$ is the edge-inversion of Example \ref{ex:uf}. This follows from vertex-transitivity of $\mathrm{U}(\{\mathrm{id}\})=\langle\sigma_{1}\rangle\ast\cdots\ast\langle\sigma_{d}\rangle$: For $\alpha\in\mathrm{U}(F)$ pick $\beta\in\mathrm{U}(\{\mathrm{id}\})$ such that $\beta(\alpha b)=b$. Then $\beta\alpha\in\mathrm{U}(F)_{b}$, hence the assertion.
\end{proof}

\subsection{Simplicity}
Since $\mathrm{U}(F)$ is vertex transitive it cannot stabilize any subtree. Since it is transitive on the (directed) edges of any given color it is easy to see that it cannot fix any end of the tree. Thus $\mathrm{U}(F)$ is geometrically dense.
We claim that the group satisfies Property $P$. For every $e=(x,y)\in E$ we have
\begin{displaymath}
 \mathrm{U}(F)_{e}\xrightarrow{\cong}\mathrm{U}(F)_{T_{y}}\times\mathrm{U}(F)_{T_{x}},\ \alpha\mapsto(\alpha_{x},\alpha_{y})
\end{displaymath}
where $\alpha_{x}$ is given by $\alpha$ on $T_{x}$ and the identity elsewhere, and similarly for $\alpha_{y}$. Then for $x'\in T_{x}$ we have $c(\alpha_{x},x')=c(\alpha,x')\in F$ and $c(\alpha_{x},-)=\id$ otherwise. This argument carries over to arbitrary finite or infinite paths in $T_{d}$.

\begin{figure}[ht]
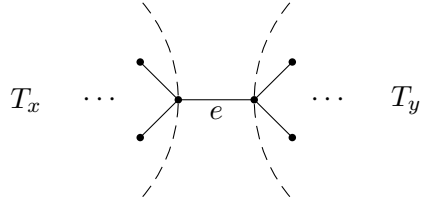

\begin{displaymath}
 \psset{unit=1cm}
 \psset{labelsep=2.5pt}
 \psset{linewidth=0.4pt}
 \pspicture(-3,-1.5)(3,1.5)
  \psdots[](-0.5,0)
  \psline(-0.5,0)(0.5,0)
  \uput[d](0,0){$e$}
  
  \psline(-0.5,0)(-1,0.5)
  \psline(-0.5,0)(-1,-0.5)
  \psdots[](0.5,0)
  \psdots[](-1,0.5)
  \psdots[](-1,-0.5)
  \rput(-1.5,0){$\cdots$}
  \rput(-2.5,0){$T_{x}$}
 
  \psline(0.5,0)(1,-0.5)
  \psline(0.5,0)(1,0.5)
  \psdots[](1,-0.5)
  \psdots[](1,0.5)
  \rput(1.5,0){$\cdots$}
  \rput(2.5,0){$T_{y}$}

  \psarc[linestyle=dashed](-2.5,0){2}{320}{40}
  \psarc[linestyle=dashed](2.5,0){2}{140}{220}
 \endpspicture
\end{displaymath}
\caption{Illustration of Tits' simplicity criterion.}
\end{figure}

As a consequence, the subgroup $\mathrm{U}(F)^{+}:=\{g\in\mathrm{U}(F)\mid \exists e\in E: ge=e\}$ of $\mathrm{U}(F)$ is simple in many cases. More precisely, we have the following.

\begin{theorem}
Let $F\le S_{d}$. Then $\mathrm{U}(F)^{+}$ is either trivial or simple. If $F$ is transitive and generated by point stabilizers we have $\mathrm{U}(F)^{+}=\mathrm{U}(F)\cap\Aut(T_{d})^{+}$ and therefore $[\mathrm{U}(F):\mathrm{U}(F)^{+}]=2$.
\end{theorem}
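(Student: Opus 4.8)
The first assertion I would read off directly from Tits' theorem. It has already been verified that $\mathrm{U}(F)$ is geometrically dense and satisfies Property $P=P_{1}$, so applying Theorem \ref{Tits-k} with $k=1$ shows at once that $\mathrm{U}(F)^{+}=\mathrm{U}(F)^{+_{1}}$ is either trivial or simple; this part needs nothing more.

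For the second assertion my plan is to sandwich $\mathrm{U}(F)^{+}$ between itself and an index-two subgroup. Fix a base vertex $b$ and let $V=V_{0}\sqcup V_{1}$ be the bipartition of $T_{d}$ by parity of distance to $b$; let $\tau\colon\Aut(T_{d})\to\mathbb{Z}/2\mathbb{Z}$ be the homomorphism recording whether an automorphism preserves or interchanges $V_{0}$ and $V_{1}$. Since every edge fixator fixes two adjacent vertices it is type-preserving, giving $\mathrm{U}(F)^{+}\subseteq\mathrm{U}(F)\cap\Aut(T_{d})^{+}\subseteq\mathrm{U}(F)\cap\ker\tau$. Conversely the inversion $\sigma_{1}\in\mathrm{U}(\{\id\})\subseteq\mathrm{U}(F)$ of Example \ref{ex:uf} interchanges $V_{0}$ and $V_{1}$, so $\tau|_{\mathrm{U}(F)}$ is onto and $\mathrm{U}(F)\cap\ker\tau$ has index two in $\mathrm{U}(F)$. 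Everything therefore follows once I prove the reverse inclusion $\mathrm{U}(F)\cap\ker\tau\subseteq\mathrm{U}(F)^{+}$, which collapses the chain to equalities and pins the index at two.

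The heart of the argument, and what I expect to be the main obstacle, is the claim that $\St_{\mathrm{U}(F)}(v)\subseteq\mathrm{U}(F)^{+}$ for every vertex $v$; this is precisely where I would invoke the hypothesis that $F$ is generated by its point stabilizers. Given $g\in\St_{\mathrm{U}(F)}(v)$, I factor its local permutation as $c(g,v)=a_{1}\cdots a_{m}$ with each $a_{j}$ fixing some colour $i_{j}\in\{1,\dots,d\}$. For each $j$ the element $\alpha_{j}$ with $c(\alpha_{j},x)=a_{j}$ for all $x$, built as in the proof of Proposition \ref{prop:uf_properties}(ii), fixes the colour-$i_{j}$ edge at $v$ together with both its endpoints and so lies in $\mathrm{U}(F)^{+}$. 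The cocycle identity of Lemma \ref{lem:cocycle_identity} gives $c(\alpha_{1}\cdots\alpha_{m},v)=a_{1}\cdots a_{m}=c(g,v)$, whence $\gamma:=(\alpha_{1}\cdots\alpha_{m})^{-1}g$ fixes $v$ with trivial local action there, hence fixes $B(v,1)$ pointwise and lies in an edge fixator. Thus $g=(\alpha_{1}\cdots\alpha_{m})\gamma\in\mathrm{U}(F)^{+}$.

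To finish I would upgrade this to transitivity of $\mathrm{U}(F)^{+}$ on $V_{0}$ using that $F$ is transitive: if $x,x'\in V_{0}$ lie at distance two with midpoint $u$, then Proposition \ref{prop:uf_properties}(ii) and transitivity of $F$ supply an element of $\St_{\mathrm{U}(F)}(u)\subseteq\mathrm{U}(F)^{+}$ carrying $x$ to $x'$, and chaining such distance-two steps along the geodesic between any two vertices of $V_{0}$ shows that $\mathrm{U}(F)^{+}$ acts transitively on $V_{0}$. Now for $g\in\mathrm{U}(F)\cap\ker\tau$ we have $gb\in V_{0}$, so I choose $h\in\mathrm{U}(F)^{+}$ with $hb=gb$; then $h^{-1}g\in\St_{\mathrm{U}(F)}(b)\subseteq\mathrm{U}(F)^{+}$ by the claim, and hence $g\in\mathrm{U}(F)^{+}$. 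This proves $\mathrm{U}(F)\cap\ker\tau\subseteq\mathrm{U}(F)^{+}$, forces $\mathrm{U}(F)^{+}=\mathrm{U}(F)\cap\Aut(T_{d})^{+}$, and gives $[\mathrm{U}(F):\mathrm{U}(F)^{+}]=2$.
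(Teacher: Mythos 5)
Your proposal is correct. The first assertion is handled exactly as in the paper: Tits' theorem (Theorem \ref{Tits-k}) with $k=1$, using the geometric density of $\mathrm{U}(F)$ and Property $P$ established just before the theorem. For the second assertion, however, you take a genuinely different and more self-contained route. The paper argues at the level of edges: it \emph{asserts} (without proof) that transitivity of $F$ together with generation by point stabilizers makes $\mathrm{U}(F)^{+}$ transitive on geometric edges; then for $g\in\mathrm{U}(F)\cap\Aut(T_{d})^{+}$ it picks $h\in\mathrm{U}(F)^{+}$ with $h(ge)=e$, concludes $hg\in\mathrm{U}(F)^{+}$ and hence $g\in\mathrm{U}(F)^{+}$, and gets the index from the known fact $[\Aut(T_{d}):\Aut(T_{d})^{+}]=2$ plus the existence of edge-inversions in $\mathrm{U}(F)$. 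You instead work at the level of vertices through the type homomorphism $\tau$: your key lemma is $\St_{\mathrm{U}(F)}(v)\subseteq\mathrm{U}(F)^{+}$, proved by factoring the local permutation $c(g,v)$ into point-stabilizing elements of $F$ and realizing each factor by an automorphism fixing an edge at $v$ (via Proposition \ref{prop:uf_properties}(ii) and the cocycle identity). This is precisely the step where the hypothesis that $F$ is generated by point stabilizers does its work, and it is the step the paper leaves implicit inside its unproven edge-transitivity claim. Combined with transitivity of $\mathrm{U}(F)^{+}$ on one type class, your chain $\mathrm{U}(F)^{+}\subseteq\mathrm{U}(F)\cap\Aut(T_{d})^{+}\subseteq\mathrm{U}(F)\cap\ker\tau\subseteq\mathrm{U}(F)^{+}$ collapses to equalities and yields the index. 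What the paper's approach buys is brevity; what yours buys is a complete proof of the transitivity input, an explicit identification of $\mathrm{U}(F)^{+}$ with the type-preserving part of $\mathrm{U}(F)$, and avoidance of the small unstated point in the paper that a type-preserving element stabilizing an edge must fix it pointwise.
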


\begin{proof}
The simplicity assertion follows from Theorem \ref{Tits-k}.
If $F$ is transitive and generated by edge stabilizers then $\mathrm{U}(F)^{+}$ acts transitively on geometric edges. This implies the non-trivial inclusion in the equality $\mathrm{U}(F)^{+}=\mathrm{U}(F)\cap\Aut(T_{d})^{+}$: Namely, let $g\in\mathrm{U}(F)\cap\Aut(T_{d})^{+}$ and fix $e\in E$. Choose $h\in\mathrm{U}(F)^{+}$ such that $h(ge)=e$. Then $hg\in\mathrm{U}(F)^{+}$ and hence $g\in\mathrm{U}(F)^{+}$. Since $[\Aut(T_{d}):\Aut(T_{d})^{+}]=2$, the non-trivial coset being given by an edge-inversion, this implies $[\mathrm{U}(F):\mathrm{U}(F)^{+}]\le 2$ and equality follows from, say, the existence of edge-inversions in $\mathrm{U}(F)$.
\end{proof}

\subsection{Universality}
The groups $\mathrm{U}(F)$ are universal in the following sense.

\begin{proposition}\label{prop:universality}
Let $H\le\Aut(T_{d})$ be vertex-transitive and locally permutation isomorphic to $F\le S_{d}$. If $F$ is transitive then there is a legal labelling $l$ of $T_{d}$ such that $H\le\mathrm{U}^{(l)}(F)$.
\end{proposition}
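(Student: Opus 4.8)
The plan is to build the labelling $l$ together with a section of the orbit map, and then use the cocycle identity (Lemma \ref{lem:cocycle_identity}) to reduce the membership $H\le\mathrm{U}^{(l)}(F)$ to purely local conditions at a single vertex. First I would fix a base vertex $b\in V$ and, exploiting that $H$ is locally permutation isomorphic to $F$, choose the restriction $l_b\colon E(b)\to\{1,\dots,d\}$ so that the local action of $\St_H(b)$ at $b$ is \emph{exactly} $F$; that is, $c(s,b)\in F$ for every $s\in\St_H(b)$. Next, using vertex-transitivity, I would pick for each vertex $v$ an element $g_v\in H$ with $g_vb=v$ and $g_b=\id$. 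Given arbitrary $k\in H$ and $x\in V$, set $v:=x$, $w:=kx$ and $s:=g_w^{-1}kg_v$; then $sb=b$, so $s\in\St_H(b)$, and $k=g_w s g_v^{-1}$. The cocycle identity yields
\[
 c(k,x)=c(g_w,b)\,c(s,b)\,c(g_v,b)^{-1}.
\]
Since $c(s,b)\in F$, it therefore suffices to arrange that $c(g_v,b)\in F$ for every $v$; in fact I will aim for the stronger normalisation $c(g_v,b)=\id$, which then forces $c(k,x)=c(s,b)\in F$.

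The second and main step is the simultaneous construction of $l$ and the section $(g_v)_v$ by breadth-first search from $b$, maintaining $l_v=l_b\circ g_v^{-1}$ on $E(v)$ for all $v$; this guarantees $c(g_v,b)=l_v\circ g_v\circ l_b^{-1}=\id$ automatically, and makes each $l_v$ a bijection since $l_b$ is and $g_v^{-1}\colon E(v)\to E(b)$ is. The genuine difficulty is that $l$ must be \emph{legal}, i.e.\ satisfy $l(e)=l(\bar e)$ on every edge; this is the only real constraint, and it couples the labellings at adjacent vertices. Concretely, when passing from a vertex $v$ (already equipped with $g_v$) to a child $v'$ along an edge $e=(v,v')$, I must take $g_{v'}=g_v m$ with $mb=g_v^{-1}v'=:p$, and legality across $e$ reduces to the single equation $l_b(m^{-1}b)=l_b(p)$.

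This is exactly where transitivity of $F$ enters, and I expect it to be the crux of the argument. The elements $m\in H$ with $mb=p$ form a coset $m_0\St_H(b)$; since $p$ is a neighbour of $b$, so is $m_0^{-1}b$, and as $m$ ranges over the coset, $m^{-1}b$ ranges over the $\St_H(b)$-orbit of $m_0^{-1}b$ inside $E(b)$. Because the local action at $b$ is $F$, which is transitive, this orbit is \emph{all} of $E(b)$, so I can select $m$ with $m^{-1}b$ of the prescribed colour $l_b(p)$, satisfying the legality equation. Performing this choice at every edge produces a consistent legal labelling $l$ together with a section obeying $c(g_v,b)=\id$ for all $v$. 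Combined with the first step and the displayed cocycle formula, this gives $c(k,x)\in F$ for all $k\in H$ and $x\in V$, i.e.\ $H\le\mathrm{U}^{(l)}(F)$, as desired.
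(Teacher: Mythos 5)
Your proof is correct and follows essentially the same route as the paper: your elements $m$ (which swap $b$ with a neighbour; conjugated by $g_v$ they are exactly the paper's edge-inversions $\sigma_{e_{v'}}$, whose existence likewise comes from vertex-transitivity plus transitivity of $F$) propagate the labelling outward from $b$, and your identity $c(k,x)=c(g_w,b)\,c(s,b)\,c(g_v,b)^{-1}$ is the same cocycle reduction to $\St_H(b)$ that the paper performs by composing inversions along the paths from $b$ to $x$ and to $h(x)$. The only difference is bookkeeping: you carry an explicit section $(g_v)_v$ normalised so that $c(g_v,b)=\id$, whereas the paper writes out the product of inversions in the final verification.
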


\begin{proof}
Fix $b\in V$. Since $H$ is locally permutation isomorphic to $F$, there is a bijection $l_{b}:E(b)\to\{1,\ldots,d\}$ such that $H_{b}|_{E(b)}=l_{b}^{-1}\circ F\circ l_{b}$. We now inductively define a legal labelling $l:E\to\{1,\ldots,d\}$ such that $\smash{H\le\mathrm{U}^{(l)}(F)}$. Set $l|_{E(b)}:=l_{b}$ and suppose inductively that $l$ is defined on $\smash{E(b,n):=\bigcup_{x\in B(b,n-1)}E(x)}$. To extend $l$ to $E(b,n+1)$, let $x\in S(b,n)$ and let $e_{x}\in E$ be the unique edge with $o(e_{x})=x$ and $d(b,t(e_{x}))+1=d(b,x)$. Since $H$ is vertex-transitive and locally permutation isomorphic to the transitive group $F$, there is an element $\sigma_{e_{x}}\in H$ which inverts the edge $e_{x}$. We may thus legally extend $l$ to $E(x)$ by setting $l|_{E(x)}:=l\circ\sigma_{e_{x}}$.

To check the inclusion $\smash{H\le\mathrm{U}^{(l)}(F)}$, let $x\in V$ and $h\in H$. If $(b,b_{1},\ldots,b_{n},x)$ and $(b,b_{1}',\ldots,b_{m}',h(x))$ denote the unique reduced paths from $b$ to $x$ and $h(x)$, then
\begin{displaymath}
 s:=\sigma_{e_{b_{1}'}}\cdots\sigma_{e_{b_{m}'}}\sigma_{e_{h(x)}}\circ h\circ\sigma_{x}\sigma_{e_{b_{n}}}\cdots\sigma_{e_{b_{2}}}\sigma_{e_{b_{1}}}\in H_{b}
\end{displaymath}
and we have $c(h,x)=c(s,b)\in F$ by Lemma \ref{lem:cocycle_identity}.
\end{proof}

\subsection{Structure of a Point Stabilizer} In this section, we exhibit a point stabilizer in $\mathrm{U}(F)$ as a profinite group in terms of $F$ for transitive $F\le S_{d}$. To this end, let $b\in V$, $\Delta:=\{1,\ldots,d\}$, $D:=\{1,\ldots,d-1\}$ and set $\Delta_{n}:=\Delta\times D^{n-1}$. We fix bijections $b_{n}:S(b,n)\to\Delta_{n}$ as follows: Given that $F$ is transitive we may for every $i\in\{1,\ldots,d\}$ fix an element $a_{i}\in F$ with $a_{i}(i)=d$. Define inductively
\begin{itemize}
 \item[(i)] $b_{1}:S(b,1)\to\Delta_{1},\ x\mapsto l((b,x))$, and
 \item[(ii)] $b_{n+1}:S(b,n+1)\to\Delta_{n+1}=\Delta_{n}\times D,\ x\mapsto (b_{n}p_{n}x,a_{l(p_{n-1}x,p_{n}x)}(l(p_{n}x),x))$
\end{itemize}
for $n\in\mathbb{N}$, where $p_{n}:\bigcup_{k\ge n}S(b,k)\to S(b,n)$ is the canonical projection. We now capture the action of $\mathrm{U}(F)_{b}$ on $S(b,n)$ by inductively defining $F(n)\le\mathrm{Sym}(\Delta_{n})$ as follows: Let $F_{d}:=\mathrm{stab}_{F}(d)$, set
\begin{itemize}
 \item[(i)] $F(1):=F\le\mathrm{Sym}(\Delta_{1})$, and define
 \item[(ii)] $F(n+1):=F(n)\ltimes F_{d}^{\Delta_{n+1}}\le\mathrm{Sym}(\Delta_{n})$
\end{itemize}
to be the wreath product for the action of $F(n)$ on $\Delta_{n}$. Further, let $\pi_{n}:F(n)\to F(n-1)$ denote the canonical projection. The bijection $b_{n}$ induces the surjective homomorphism \begin{displaymath}
  \varphi_{n}:\mathrm{U}(F)_{b}\to F(n)\le\mathrm{Sym}(\Delta_{n}),\ g\mapsto b_{n}\circ g\circ b_{n}^{-1}
\end{displaymath}
with kernel $\{g\in\mathrm{U}(F)_{b}\mid g|_{S(b,n)}=\id\}=\{g\in\mathrm{U}(F)_{b}\mid g|_{B(b,n)}=\id\}$ and one readily checks that the map
\begin{displaymath}
 \varphi:=(\varphi_{n})_{n\in\mathbb{N}}:\mathrm{U}(F)_{b}\to \underset{\longleftharpoondown}{\lim}\ F(n)=\left\{\left.(f_{n})_{n=1}^{\infty}\in\prod_{n=1}^{\infty}F(n)\right|\forall n\in\mathbb{N}:\pi_{n}f_{n}=f_{n-1}\right\}
\end{displaymath}
is an isomorphism of topological groups: Abbreviate $G:=\smash{\underset{\longleftharpoondown}{\lim}\ F(n)}$. 
Clearly, $\varphi$ is a bijective homomorphism.
To prove that it is a homeomorphism, note that $\mathrm{U}(F)_{b}$ is compact and $\smash{\underset{\longleftharpoondown}{\lim}\ F(n)}$ is Hausdorff; therefore $\varphi$ is closed and it suffices to show continuity. Let $U:=G\cap\prod_{n=1}^{\infty}U_{n}$ be a basic open neighbourhood of $\smash{f\in\underset{\longleftharpoondown}{\lim}\ F(n)}$. Then there is $N\in\mathbb{N}$ such that $U_{n}=F(n)$ for all $n\ge N$ and hence for every $g\in\varphi^{-1}(U)$ the open neighbourhood $\{h\in\mathrm{U}(F)_{b}\mid h|_{B(b,N)}=g|_{B(b,N)}\}$ of $g$ is contained in $\varphi^{-1}(U)$.

\section{Simple totally disconnected locally compact groups with prescribed local actions}\label{sec:k-closure}
The purpose of this section is to find new examples of t.d.l.c.\ groups which are (abstractly) simple, compactly generated and non-discrete. 
The motivation for this comes from classification results of locally compact groups by Caprace and Monod \cite{CM11}
which yield cases in which 
compactly generated t.d.l.c.\ groups decompose into (topologically) simple compactly generated non-discrete pieces. 
We are still at the stage of collecting examples of such simple groups,
aiming in the long term for some sort of classification.
The examples collected so far can be classified into the following types:
\begin{itemize}
 \item simple Lie groups
 \item simple algebraic groups over local fields
 \item complete Kac--Moody groups over finite fields
 \item automorphism groups of trees, some CAT(0) cube-complexes, and right-angled buildings
 \item variations on the above (e.g.\ almost automorphisms of trees).
\end{itemize}

To obtain these new examples of compactly generated, simple, non-discrete t.d.l.c.\ groups, 
we shall introduce some ``$k$-thickened'' (for $k\in\mathbb{N}$) variations of the universal group construction:
the \emph{$k$-closures} of a given $G\leq \Aut(T)$.
These prescribe the action on all balls of radius $k$ by elements of $G$.
We will then see that the $k$-closure of a group of tree automorphisms satisfies Property $P_k$
and use Theorem \ref{Tits-k} to obtain abstractly simple t.d.l.c.\ groups which are compactly generated. 
The last step will be ensuring that these are non-discrete and different.

\subsection{$k$-closures and Property $P_k$}

\begin{definition}
 Let $G\leq \mathrm{Aut}(T)$ and $k\in \mathbb{N}$. 
 The \emph{k-closure} of $G$ is 
 $$G^{(k)}:=\{ h\in \mathrm{Aut}(T)\mid \forall x\in V(T): \exists g\in G: h|_{B(x,k)} = g|_{B(x,k)} \},$$ all automorphisms of $T$ that agree on each ball of radius $k$ with some element of $G$.
\end{definition}
In this setting, $G$ is  the analogue of $F$ in the definition of $\mathrm{U}(F)$, providing a list of ``allowed'' actions.
However, here we do not require that the tree be regular.
Note that a given $h\in G^{(k)}$ need not agree with the same element of $G$ on every ball;
the point is that for each ball there is \emph{some} element of $G$ agreeing with $h$, and they may all be different for each ball. The $k$-closure of $G$ has the following basic properties, which justify the term ``closure''.

\begin{proposition}\label{A:closure}
Let $G\le\Aut(T)$ and $k\in\mathbb{N}$.
 \begin{itemize}
  \item[(i)] $G^{(k)}$ is a closed subgroup of $\mathrm{Aut}(T)$.
  \item[(ii)] For every $k,l\in \mathbb{N}$ with $l>k$ we have $G\leq G^{(l)}\leq G^{(k)}$.
  \item[(iii)] We have $\bigcap_{k\in\mathbb{N}} G^{(k)}=\overline{G}$ (the topological closure of $G$ in $\mathrm{Aut}(T)$).
 \end{itemize}
\end{proposition}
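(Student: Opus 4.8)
The plan is to prove (i) and (ii) directly from the definition and then deduce (iii) from them. For (i), I would first verify that $G^{(k)}$ is a subgroup. The identity belongs to it since $\id\in G$ agrees with it on every ball. For closure under composition, given $h_{1},h_{2}\in G^{(k)}$ and a vertex $x$, I would choose $g_{2}\in G$ with $h_{2}|_{B(x,k)}=g_{2}|_{B(x,k)}$; as $h_{2}$ is an isometry it maps $B(x,k)$ onto $B(h_{2}x,k)=B(g_{2}x,k)$, so picking $g_{1}\in G$ with $h_{1}|_{B(g_{2}x,k)}=g_{1}|_{B(g_{2}x,k)}$ yields $(h_{1}h_{2})|_{B(x,k)}=(g_{1}g_{2})|_{B(x,k)}$ with $g_{1}g_{2}\in G$. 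For inverses, given $h\in G^{(k)}$ and a vertex $x$, I would set $z:=h^{-1}x$ and take $g\in G$ agreeing with $h$ on $B(z,k)$; since $h$ carries $B(z,k)$ bijectively onto $B(hz,k)=B(x,k)$ and equals $g$ there, the inverses agree on $B(x,k)$, i.e.\ $h^{-1}|_{B(x,k)}=g^{-1}|_{B(x,k)}$ with $g^{-1}\in G$. The only real care needed throughout is the bookkeeping of how the radius-$k$ ball is transported by the isometries, and this is the step I expect to be the main (if mild) obstacle.

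To finish (i) I would show that the complement of $G^{(k)}$ is open. If $h\notin G^{(k)}$ there is a witnessing vertex $x$ with $h|_{B(x,k)}\neq g|_{B(x,k)}$ for every $g\in G$. Since $B(x,k)$ is a finite set of vertices, $\mathcal{U}(h,B(x,k))$ is a basic open neighbourhood of $h$, and every element of it restricts to $B(x,k)$ exactly as $h$ does, hence also fails the defining condition at $x$; thus this neighbourhood misses $G^{(k)}$. Equivalently, $G^{(k)}=\bigcap_{x\in V(T)}\bigcup_{g\in G}\mathcal{U}(g,B(x,k))$ exhibits it as an intersection of clopen sets.

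For (ii), the inclusion $G\le G^{(l)}$ is immediate because each $g\in G$ agrees with itself on every ball, and $G^{(l)}\le G^{(k)}$ for $l>k$ follows from $B(x,k)\subseteq B(x,l)$: an automorphism agreeing with some $g\in G$ on $B(x,l)$ a fortiori agrees with $g$ on $B(x,k)$. Finally, for (iii) one inclusion is formal: by (i) each $G^{(k)}$ is closed and by (ii) contains $G$, hence contains $\overline{G}$, so $\overline{G}\subseteq\bigcap_{k}G^{(k)}$. For the reverse inclusion I would take $h\in\bigcap_{k}G^{(k)}$ and an arbitrary basic neighbourhood $\mathcal{U}(h,\mathcal{F})$ with $\mathcal{F}\subseteq V(T)$ finite; fixing any vertex $x$ and setting $k:=\max_{y\in\mathcal{F}}d(x,y)$ gives $\mathcal{F}\subseteq B(x,k)$, and since $h\in G^{(k)}$ some $g\in G$ agrees with $h$ on $B(x,k)$, hence on $\mathcal{F}$, so $g\in\mathcal{U}(h,\mathcal{F})\cap G$. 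Thus every neighbourhood of $h$ meets $G$, i.e.\ $h\in\overline{G}$, completing the proof.
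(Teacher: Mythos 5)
Your proof is correct and follows essentially the same route as the paper's: the same ball-transport bookkeeping for products and inverses (choosing the second witness on $B(h_2x,k)=B(g_2x,k)$, resp.\ on $B(h^{-1}x,k)$), the same description of the complement of $G^{(k)}$ as a union of basic open sets (your clopen-intersection formulation $G^{(k)}=\bigcap_{x}\bigcup_{g\in G}\mathcal{U}(g,B(x,k))$ is just a restatement of this), and the same argument for (iii) via the observation that every finite $\mathcal{F}$ lies in some ball $B(x,k)$. The only caveat — shared with the paper, so not a gap relative to it — is that treating $\mathcal{U}(h,B(x,k))$ as a basic open set uses finiteness of balls, i.e.\ local finiteness of $T$.
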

\begin{proof} \hspace{0cm}
 \begin{itemize}
  \item[(i)] To see that $G^{(k)}$ is indeed a subgroup of $\Aut(T)$, notice that if $a, b \in G^{(k)}$ and $x\in V(T)$ then 
  there exist $g, h\in G$ such that $a|_{B(x,k)}=g|_{B(x,k)}$ and $b|_{B(ax,k)}=h|_{B(ax,k)}$, so 
  $b\circ a|_{B(x,k)}=h\circ g|_{B(x,k)}$ and $b\circ a\in G^{(k)}$.
  Also, there exists some $f\in G$ such that $a|_{B(a^{-1}x,k)}=f|_{B(a^{-1}x,k)}$, so $a^{-1}|_{B(x,k)}=f^{-1}|_{B(x,k)}$ 
  and $a^{-1}\in G^{(k)}$.
  
  For the closure part, note that for each $a\notin G^{(k)}$ there is some vertex $x_a$ such that no element of $G$ agrees with $a$ on $B(x_a,k)$. 
  Thus $\Aut(T)\setminus G^{(k)} = \bigcap_{a\notin G^{(k)}} \mathcal{U}(a, B(x_a,k))$ is a union of basic open sets.
 
  \item[(ii)] The group $G$ agrees with itself on balls of all radii so $G\leq G^{(l)}$ for all $l$
  and if $l>k$ then $G^{(l)}$ certainly agrees with $G$ on balls of smaller radius $k$, so $G^{(l)}\leq G^{(k)}$.
  
  \item[(iii)] Since $G\leq G^{(k)}$ for all $k$ and $G^{(k)}$ is closed, the closure of $G$ must be contained in every $G^{(k)}$ 
  and therefore in the intersection of all of them.
  For the other direction, we show that every element $a\in \bigcap_{k\in \mathbb{N}} G^{(k)}$ is a point of closure of $G$. 
  Fix a vertex $x$ and consider $\mathcal{U}(a,B(x,k))$; 
  then, since $a \in G^{(k)}$, there is some $g\in G$ such that $g\in \mathcal{U}(a,B(x,k))$.
\end{itemize}
\end{proof}
Just as U$(F)$ satisfies Property $P$
the $k$-closure of $G$ satisfies Property $P_k$.
 
\begin{proposition}\label{A:prop}
Let $G\leq \mathrm{Aut}(T)$ and $k\in\mathbb{N}$. Then $G^{(k)}$ satisfies Property $P_k$.
\end{proposition}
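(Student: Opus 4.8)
The map $\Phi$ is injective by construction, so the plan is to establish surjectivity: given a tuple $(\phi_x)_{x\in C}\in\prod_{x\in C}F_x$, I want to produce a single element $h\in\Fix_{G^{(k)}}(C^{k-1})$ with $h|_{T_x}=\phi_x$ for every $x\in C$. By definition of $F_x$ (for the group $G^{(k)}$), each $\phi_x$ is the restriction to $T_x$ of some $\tilde g_x\in\Fix_{G^{(k)}}(C^{k-1})$; in particular $\phi_x$ fixes $C^{k-1}\cap T_x$ pointwise. Since the subtrees $(T_x)_{x\in C}$ partition $V(T)$ and are joined only along the edges of $C$, whose endpoints lie in $C\subseteq C^{k-1}$ and are therefore fixed by every $\phi_x$, the prescription $h|_{T_x}:=\phi_x$ glues to a well-defined automorphism $h\in\Aut(T)$ that fixes $C^{k-1}$ pointwise. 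The only thing left to verify is the membership $h\in G^{(k)}$.

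The heart of the argument is to check, for each vertex $z\in V(T)$, that $h$ agrees on $B(z,k)$ with some element of $G$. Let $x:=\pi_C(z)$. The key geometric observation I would isolate is that any vertex $w\in B(z,k)$ whose projection $\pi_C(w)=:x'$ differs from $x$ must already lie in $C^{k-1}$: indeed the geodesic from $z$ to $w$ runs through $x$ and $x'$, so $d(z,w)=d(z,x)+d(x,x')+d(x',w)$, and $d(z,w)\le k$ together with $d(x,x')\ge 1$ forces $d(w,C)=d(x',w)\le k-1$. Consequently, on $B(z,k)\cap T_{x'}$ the glued map $h$ is the identity (it fixes $C^{k-1}$), and so is $\tilde g_x$ (which also fixes $C^{k-1}$); while on $B(z,k)\cap T_x$ we have $h=\phi_x=\tilde g_x|_{T_x}$ by construction. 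Hence $h|_{B(z,k)}=\tilde g_x|_{B(z,k)}$, and because $\tilde g_x\in G^{(k)}$ there is some $g\in G$ with $\tilde g_x|_{B(z,k)}=g|_{B(z,k)}$, whence $h|_{B(z,k)}=g|_{B(z,k)}$. As $z$ was arbitrary, $h\in G^{(k)}$, so $h\in\Fix_{G^{(k)}}(C^{k-1})$ with $\Phi(h)=(\phi_x)_{x\in C}$, giving surjectivity and hence that $\Phi$ is an isomorphism.

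The step I expect to be the main obstacle is exactly the membership $h\in G^{(k)}$: a priori the element $h$ is assembled from infinitely many different witnesses $\tilde g_x$, so it is not clear that a single member of $G$ matches $h$ on a prescribed ball. The geometric observation above is what should resolve this, by showing that near any $z$ the ball $B(z,k)$ meets the ``foreign'' pieces $T_{x'}$ only inside the fixed neighbourhood $C^{k-1}$, so that the one witness $\tilde g_x$ already accounts for all of $B(z,k)$. I would also record the two elementary facts used silently in the gluing, namely that $\pi_C^{-1}(x)$ is a subtree and that the resulting map is a graph automorphism, but these are routine consequences of the convexity of $C$.
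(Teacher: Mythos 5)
Your proof is correct and takes essentially the same route as the paper's: glue the prescribed restrictions $(\phi_x)_{x\in C}$ into a single automorphism and verify membership in $G^{(k)}$ ball by ball, using one witness $\tilde g_x\in\Fix_{G^{(k)}}(C^{k-1})$ per ball. Your key geometric observation --- that any vertex of $B(z,k)$ whose projection to $C$ differs from $\pi_C(z)$ already lies in $C^{k-1}$ --- is precisely the justification for the step the paper states without proof, namely that both the glued map and the witness ``act trivially'' on the part of the ball outside $T_{\pi_C(z)}$.
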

\begin{proof}
Let $C=(\dots,x_0,x_1,\dots,x_n,\dots)$ be any finite or (bi-)infinite path
and suppose that $f:=(\dots,f_0,f_1,\dots,f_n,\dots) \in \prod_{x_i\in C}F_{x_i}$.
To see that $f\in G^{(k)}$, pick a vertex $v$, which must be in $T_{x_i}$ for some $x_i\in C$.
By definition, $f_i$ is the restriction to $T_{x_i}$ of some $h_i\in\Fix_{G^{(k)}}(C^{k-1})$. 
Thus, if $B(v,k)$ is entirely contained in $T_{x_i}$ then $f|_{B(v,k)}=h_i|_{B(v,k)}=g|_{B(v,k)}$ for some $g\in G$, since $h_i\in G^{(k)}$.
And if there is some part of $B(v,k)$ outside $T_{x_i}$ then both $f$ and $h_i$ act trivially on it. 
In either case, there is some $g\in G$ such that $f|_{B(v,k)}=g|_{B(v,k)}$ and $f\in G^{(k)}$.
\end{proof}

Satisfying Property $P_k$ characterizes when the process of taking $k$-closures stabilizes.
\begin{theorem}
 If $G\leq \mathrm{Aut}(T)$ satisfies Property $P_k$ then $G^{(k)}=\overline{G}$ 
 (and if $G^{(k)}=\overline{G}$ then $\overline{G}$ satisfies $P_k$ by Proposition \ref{A:prop}).
\end{theorem}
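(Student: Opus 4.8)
The plan is to prove the forward implication: assuming $G$ satisfies Property $P_k$, show $G^{(k)} = \overline{G}$. Since Proposition \ref{A:closure}(iii) gives $\overline{G} = \bigcap_{l} G^{(l)} \le G^{(k)}$, the nontrivial inclusion is $G^{(k)} \le \overline{G}$. By the same proposition, $\overline{G} = \bigcap_l G^{(l)}$, so it suffices to show that $G^{(k)} \le G^{(l)}$ for every $l > k$; equivalently, that for any $h \in G^{(k)}$ and any vertex $x$, there is a single element of $G$ agreeing with $h$ on the larger ball $B(x,l)$. The heart of the matter is thus a \emph{bootstrapping} step: Property $P_k$ must upgrade local agreement on balls of radius $k$ to local agreement on arbitrarily large balls.

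First I would fix $h \in G^{(k)}$ and a vertex $x$, and aim to produce $g \in G$ with $h|_{B(x,l)} = g|_{B(x,l)}$. The key is to exploit that $h$ agrees with \emph{some} element of $G$ on every radius-$k$ ball, and then to glue these local elements into a single global one using the independence furnished by Property $P_k$. Concretely, I would choose a path $C$ through the ball $B(x,l)$ and consider how $h$ restricts to the subtrees $T_{y}$ hanging off the vertices $y$ of $C$. Since $h \in G^{(k)}$, for each such subtree I can find an element of $G$ matching $h$ locally; Property $P_k$ (applied with the relevant path) then says that the product of these restrictions, adjusted to fix the thickened path $C^{k-1}$, is realized by a single element of $\Fix_G(C^{k-1})$. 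The technical content is that the local pieces, each agreeing with $G$ only on radius-$k$ balls, can be reassembled coherently, with the overlaps on the $(k-1)$-neighborhood of $C$ handled by a correcting element that captures the ``spine'' behaviour of $h$.

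The main obstacle I expect is precisely this reassembly on the overlap region: Property $P_k$ is stated for the \emph{fixator} $\Fix_G(C^{k-1})$, whereas a general $h \in G^{(k)}$ need not fix $C^{k-1}$. The clean way around this is to first produce some $g_0 \in G$ agreeing with $h$ on a thickened neighborhood of $x$ (using the radius-$k$ agreement at $x$ itself, after possibly enlarging via the path structure), then replace $h$ by $g_0^{-1} h$, which now fixes a ball around $x$ and lies in $\Fix_{G^{(k)}}(C^{k-1})$ for a suitable $C$. Property $P_k$ applied to this corrected element, together with the fact that each of its restrictions $\psi_i$ to the subtrees $T_{x_i}$ already coincides with $G$ on radius-$k$ balls (hence is itself induced by an element of $G$ via $P_k$ on the individual subtrees), lets me splice everything into a genuine element of $G$. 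Translating back by $g_0$ then yields the desired $g \in G$ with $h|_{B(x,l)} = g|_{B(x,l)}$, completing $G^{(k)} \le \overline{G}$.

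For the parenthetical converse, the argument is immediate from Proposition \ref{A:prop}: if $G^{(k)} = \overline{G}$, then since $G^{(k)}$ always satisfies Property $P_k$ by that proposition, so does $\overline{G}$. I would simply remark that no further work is needed here.
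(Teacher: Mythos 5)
Your reduction and opening move do coincide with the paper's: by Proposition \ref{A:closure} it suffices to prove $G^{(k)}\le G^{(n)}$ for all $n\ge k$, and one begins by correcting $h\in G^{(k)}$ by some $g_0\in G$ so that $g_0^{-1}h$ fixes $B(x,k)$. The gap is in your splicing step, and it is a circularity. You apply Property $P_k$ to the corrected element $g_0^{-1}h$, which is only known to lie in $G^{(k)}$, and then assert that each resulting piece $\psi_i$ (supported on a subtree $T_{x_i}$) ``is itself induced by an element of $G$ via $P_k$ on the individual subtrees.'' But Property $P_k$ for $G$ is a statement about the fixator $\Fix_G(C^{k-1})$: it decomposes elements that \emph{already belong to} $G$ and guarantees that arbitrary products of the resulting restrictions are again in $G$; it gives no information about elements of $G^{(k)}$ that are not in $G$. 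Observing that $\psi_i$ coincides with elements of $G$ on radius-$k$ balls merely restates $\psi_i\in G^{(k)}$, and upgrading that to ``$\psi_i$ is realized by a single element of $G$ on a large ball'' is exactly the statement under proof, now for $\psi_i$ instead of $h$. There is also a quantitative obstruction to your one-shot plan: $g_0^{-1}h$ is only known to fix $B(x,k)$, so membership in $\Fix_{G^{(k)}}(C^{k-1})$ is guaranteed only for paths $C$ with $C^{k-1}\subset B(x,k)$, i.e.\ paths of length at most two through $x$; such a short $C$ cannot control all of $B(x,l)$ in a single application of $P_k$.

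The repair is what the paper does: apply $P_k$ to honest elements of $G$ and induct on the radius, gaining one unit at a time. With $h':=g_0^{-1}h$ fixing $B(x,k)$, pick for each neighbour $u_i$ of $x$ an element $a_i\in G$ agreeing with $h'$ on $B(u_i,k)$. Since $e_i^{k-1}\subset B(x,k)\cap B(u_i,k)$ for the edge $e_i=(x,u_i)$, and $h'$ is the identity on $B(x,k)$, each $a_i$ lies in $\Fix_G(e_i^{k-1})$; now Property $P_k$ for $G$, applied to $a_i\in G$, splits $a_i=b_ic_i$ with $b_i\in G$ supported on the half-tree $T_{u_i}$ and $c_i\in G$ fixing it. The $b_i$ have pairwise disjoint supports, so $b_1\cdots b_m\in G$ fixes $B(x,k)$ and agrees with $h'$ on each $B(x,k+1)\cap T_{u_i}\subset B(u_i,k)$, hence on all of $B(x,k+1)$; thus $b_1\cdots b_m\,g_0$ agrees with $h$ on $B(x,k+1)$ and $h\in G^{(k+1)}$. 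Iterating gives $G^{(k)}=G^{(n)}$ for all $n\ge k$, whence $G^{(k)}=\bigcap_n G^{(n)}=\overline{G}$ by Proposition \ref{A:closure}. This is precisely the paper's argument (illustrated there for $G^{(1)}=G^{(2)}$, with the general case cited to Banks--Elder--Willis): the decisive difference from your proposal is that $P_k$ is applied to the approximating elements $a_i\in G$, never to the element of $G^{(k)}$ being approximated. Your parenthetical converse via Proposition \ref{A:prop} is fine and matches the paper.
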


 \begin{figure}[ht]\label{A:Pk_stableclosures}
 \tikzstyle{level 1}=[sibling angle=120]
\tikzstyle{level 2}=[sibling angle=60]

\begin{tikzpicture}[grow cyclic,shape=circle,
                    cap=round]
\node[draw, level distance=5mm] {$v$} 
	child [label=below:\A] foreach \A in {1,2,m} 
			{ node[draw, inner sep=1pt] {$u_{\A}$} 			
				child[level distance=22mm] { node {} edge from parent[snake=expanding waves,segment length=2mm,segment angle=60, dashed, draw]node[fill=black!20]{$b_{\A}$} } 
    };
\end{tikzpicture}
\caption{Each of the automorphisms $b_i$ acts on the subtree rooted at $u_i$.}
\end{figure}
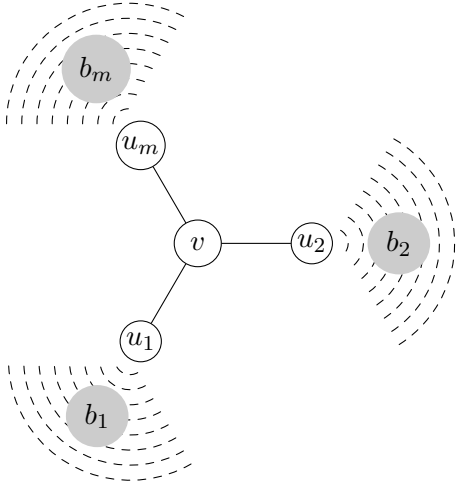

\begin{proof}
 We know from \ref{A:closure} that $\overline{G}=\bigcap_{k\in\mathbb{N}} G^{(k)}$,
 so it suffices to show that $G^{(k)}=G^{(n)}$ for all $n\geq k$.
 To illustrate the proof, we only show the case $G^{(1)}=G^{(2)}$ (see \cite[Theorem 5.4]{BEW11} for the full proof).
 Let $x\in G^{(1)}$.
 For each vertex $v$ there is some $g\in G$ such that $x|_{B(v,1)}=g|_{B(v,1)}$; thus $xg^{-1}\in \Fix_{G^{(1)}}(B(v,1))$. 
 Suppose that $u_1,\dots,u_m$ are the neighbours of $v$.
 Since $xg^{-1}\in G^{(1)}$, for each $i$ there exist $a_i\in G$ such that $xg^{-1}|_{B(u_i,1)}=a_i|_{B(u_i,1)}$.
 So $a_i$ fixes the edge $(v,u_i)$.
 Because $G$ satisfies property $P_1$, there exist unique $b_i, c_i\in G$
 such that $a_i=b_ic_i$,
 where $b_i$ only acts non-trivially on $T_{u_i}$ (and $c_i$ fixes $T_{u_i}$, see Figure \ref{A:Pk_stableclosures}). 
 Then the product $b_1\dots b_m$ fixes all neighbours of $v$ and $v$ itself; that is,  it fixes $B(v,1)$.
 Furthermore, $c_i$ fixes $T_{u_i}$, so $b_i|_{B(u_i,1)}=xg^{-1}|_{B(u_i,1)}$ and hence $b_1\dots b_m|_{B(u_i,1)}=xg^{-1}|_{B(u_i,1)}$ for each $i$. 
 Thus $b_1\dots b_m|_{B(v,2)}=xg^{-1}|_{B(v,2)}$ and $b_1\dots b_mg|_{B(v,2)}=x{-1}|_{B(v,2)}$. 
 Since $b_1\dots b_mg\in G$ we conclude that $x\in G^{(2)}$, as required. 
\end{proof}

More importantly, we deduce the following statement which will be used to find infinitely many distinct simple subgroups.
\begin{corollary}\label{stable_k_closures}
 There are infinitely many distinct $k$-closures of $G$ if and only if $\overline{G}$ does not satisfy Property $P_k$ for any $k$.
\end{corollary}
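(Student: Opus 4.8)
The plan is to prove the equivalence in Corollary~\ref{stable_k_closures} by combining the nesting property from Proposition~\ref{A:closure}(ii), namely $\overline{G}\le\cdots\le G^{(k+1)}\le G^{(k)}\le\cdots$, with the stabilization theorem immediately preceding the corollary. The key observation is that the chain $(G^{(k)})_{k\in\mathbb{N}}$ is a \emph{decreasing} chain of closed subgroups, so there are infinitely many distinct $k$-closures precisely when this chain never stabilizes, i.e.\ when $G^{(k+1)}\lneq G^{(k)}$ for infinitely many $k$. Since the chain is monotone, ``infinitely many distinct terms'' is equivalent to ``the chain never becomes constant'', which in turn (by monotonicity again) is equivalent to ``$G^{(k)}\ne G^{(k+1)}$ for \emph{every} $k$'', or contrapositively: there are only finitely many distinct $k$-closures if and only if the chain is eventually constant, say $G^{(k)}=G^{(k+1)}=\cdots$ for some $k$.

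First I would make precise the reduction just described. If the chain stabilizes at stage $k$, meaning $G^{(m)}=G^{(k)}$ for all $m\ge k$, then by Proposition~\ref{A:closure}(iii) we get $\overline{G}=\bigcap_{m} G^{(m)}=G^{(k)}$, so $\overline{G}=G^{(k)}$ is itself a $k$-closure; by the parenthetical remark in the preceding theorem this forces $\overline{G}$ to satisfy Property $P_k$. Conversely, if $\overline{G}$ satisfies Property $P_k$ for some $k$, then since $\overline{G}$ and $G$ have the same $j$-closures for every $j$ (because agreeing on balls with an element of $G$ is equivalent to agreeing with an element of $\overline{G}$, using continuity of the action), I can apply the stabilization theorem to $\overline{G}$ in place of $G$ to conclude $\overline{G}^{(k)}=\overline{\overline{G}}=\overline{G}$, and hence $G^{(m)}=\overline{G}=G^{(k)}$ for all $m\ge k$. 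Either way, finitely many distinct $k$-closures is equivalent to $\overline{G}$ satisfying $P_k$ for some $k$.

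Taking contrapositives then yields the statement exactly as phrased: there are infinitely many distinct $k$-closures of $G$ if and only if $\overline{G}$ fails Property $P_k$ for every $k$. The one point requiring care is the elementary but essential lemma that a monotone chain with infinitely many distinct \emph{values} is the same as a chain that is not eventually constant; this is immediate for a decreasing chain of sets, since once two consecutive terms coincide, all later terms coincide by nesting, so the number of distinct terms is finite exactly when some consecutive pair coincides.

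I expect the main obstacle to be the auxiliary claim that $G$ and $\overline{G}$ have identical $k$-closures, which is what licenses applying the stabilization theorem (stated for $G$) to the closure $\overline{G}$. This should follow because $h\in G^{(k)}$ means that for each vertex $x$ there is $g\in G$ with $h|_{B(x,k)}=g|_{B(x,k)}$, and any element of $\overline{G}$ agreeing with $h$ on the finite ball $B(x,k)$ can be approximated by elements of $G$ in the topology of pointwise convergence, so $\{g\in\overline{G}\mid g|_{B(x,k)}=h|_{B(x,k)}\}$ is nonempty iff its intersection with $G$ is nonempty; thus $G^{(k)}=\overline{G}^{(k)}$. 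With this identification in hand, the rest of the argument is purely formal bookkeeping with the monotone chain.
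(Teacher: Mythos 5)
Your proposal follows essentially the same route as the paper: both directions are formal bookkeeping from Proposition \ref{A:closure}(ii)--(iii), Proposition \ref{A:prop}, and the stabilization theorem, using that a nested decreasing chain has infinitely many distinct terms exactly when it never becomes eventually constant. In fact you are more careful than the paper on one point: in the direction ``infinitely many distinct $k$-closures $\Rightarrow$ failure of $P_k$'', the paper's proof concludes that \emph{$G$} does not satisfy $P_k$, whereas the statement concerns $\overline{G}$; closing that gap requires exactly your auxiliary claim $G^{(k)}=\overline{G}^{(k)}$, which is what licenses applying the stabilization theorem to $\overline{G}$.

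Two of your justifications, however, need repair, even though the facts they support are true. First, the assertion that in a decreasing chain of sets ``once two consecutive terms coincide, all later terms coincide by nesting'' is false in general (consider $A_1=A_2\supsetneq A_3\supsetneq A_4\supsetneq\cdots$); fortunately your argument never needs it. What you actually use is that a decreasing chain with finitely many distinct values is eventually constant, and this is true: the values are totally ordered by inclusion, and once the smallest value is attained at index $k$, every $A_m$ with $m\ge k$ is a value contained in it, hence equal to it. Second, your proof of $G^{(k)}=\overline{G}^{(k)}$ appeals to finiteness of the balls $B(x,k)$, i.e.\ to local finiteness of $T$, while this section of the paper allows arbitrary trees. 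A proof avoiding this: since $G^{(k)}$ is closed and contains $G$, we have $G\le\overline{G}\le G^{(k)}$; the $k$-closure operation is monotone and satisfies $\bigl(G^{(k)}\bigr)^{(k)}=G^{(k)}$ (an element agreeing on each $k$-ball with some element of $G^{(k)}$ also agrees there with some element of $G$), whence $G^{(k)}\le\overline{G}^{(k)}\le\bigl(G^{(k)}\bigr)^{(k)}=G^{(k)}$. With these two patches your argument is complete.
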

\begin{proof}
 If $\overline{G}$ does not satisfy Property $P_k$ for any $k$, then $G^{(k)}\neq \overline{G}$ (by Proposition \ref{A:prop}). 
 Hence $G^{(k)}\neq \bigcap_n G^{(n)}$ for all $k$ and therefore the sequence $(G^{(k)})_k$ never stabilizes; 
 in particular there are infinitely many distinct $k$-closures $G^{(k)}$ of $G$. 
 For the converse, we have that $(G^{(k)})_k$ never stabilizes, therefore there is no $k$ such that $G^{(k)}=\bigcap_n G^{(n)}=\overline{G}$
 and so $G$ does not satisfy $P_k$ for any $k$. 
\end{proof}

\subsection{Local rigidity for $k$-closures}
We digress a moment from our objective in this section of finding infinitely many simple groups, to point out a local-global result of Burger--Mozes  that is relevant to $k$-closures.
\begin{theorem}{\cite[Proposition 3.3.1]{BM00a}}
Let $F \leq \Sym(d)$ be a finite $2$-transitive permutation group on the set $\{1,2,\ldots,d\}$ and 
$F_1$ the stabilizer of a point under this action. Assume that $F_1$ is non-abelian and simple. 

Let $T$ be a $d$-regular tree and $G \leq \Aut(T)$ a vertex-transitive subgroup. 
If $x \in V(T)$ is any vertex we have a map $$\phi: \St_G(x) \arrow \Sym(d)$$ given by the action of $\St_G(x)$ on $B(x,1)$. 
Assume that $\phi(\St_G(x)) = F$.
Write $K := \ker(\phi)=\Fix_G(B(x,1))$ and  consider the map
$$\phi_2: K \arrow \prod^d F_1$$ given by the action of $K$ on $B(x,2)$.

Then $\phi_2(K) =\prod^a F_1$ with $a \in \{0,1,d\}$ and we have the following dichotomy:
\begin{itemize}
\item $a \in \{0,1\}$ if and only if $G$ is discrete. 
\item $a = d$ if and only if $\overline{G} = \mathrm{U}(F)$. 
\end{itemize}
\end{theorem}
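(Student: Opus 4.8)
The plan is to split the argument into three parts: first pin down the image $\phi_2(K)\le\prod^d F_1$ together with its symmetries, then prove the trichotomy $a\in\{0,1,d\}$, and finally match each value of $a$ to the dichotomy. To begin I would check that $\phi_2$ really lands in $\prod^d F_1$: an element of $K=\Fix_G(B(x,1))$ fixes $x$ and each neighbour $x_i$ together with the edge $(x_i,x)$, so by vertex-transitivity its local action at $x_i$ lies in the stabilizer of the $x$-direction, a copy of $F_1$. The crucial extra structure is that $\St_G(x)$ normalizes $K$, so conjugation makes $\St_G(x)/K\cong F$ act on $Q:=\phi_2(K)\le\prod_{i=1}^d F_1$; this action permutes the $d$ coordinates exactly as $F$ permutes $\{1,\dots,d\}$ (hence transitively, and primitively since $F$ is $2$-transitive), while $\St_G(x)\cap\St_G(x_i)$ acts on the $i$-th factor by conjugation through all of $F_1$ (using that $F_1$, being non-abelian simple, is centreless).

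For the trichotomy I would first consider the coordinate projections $\pi_i\colon Q\to F_1$. By the previous paragraph $\pi_i(Q)$ is normalized by $\Inn(F_1)$, i.e. normal in $F_1$, so simplicity forces $\pi_i(Q)\in\{1,F_1\}$, and transitivity of the coordinate action makes all $\pi_i(Q)$ equal. If they are all trivial then $Q=1$ and $a=0$. Otherwise $Q$ is a subdirect product of $F_1^{\,d}$, and here I would invoke the standard structure theorem for subdirect products of a non-abelian simple group: such a subgroup is a direct product of full diagonal subgroups indexed by a partition of the coordinate set. Since $Q$ is $F$-invariant this partition is preserved by $F$, and primitivity leaves only the two extreme partitions: into singletons, giving $Q=\prod^d F_1$ and $a=d$, or the one-block partition, giving a single diagonal $Q\cong F_1$ and $a=1$.

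Next I would show $a\in\{0,1\}\Rightarrow G$ discrete by proving that the fixator chain $\Fix_G(B(x,n))$ stabilizes. By vertex-transitivity the value of $a$ is the same at every vertex. The key observation is that for a vertex $y$ at distance $n-1$ from $x$ one has $B(y,1)\subseteq B(x,n)$, so any $g\in\Fix_G(B(x,n))$ lies in $\Fix_G(B(y,1))$ and in particular acts trivially on the ``$x$-ward'' block of $S(y,2)$, which is contained in $B(x,n)$. When $a\le 1$ the action of $\Fix_G(B(y,1))$ on $S(y,2)$ is either trivial ($a=0$) or a single diagonal copy of $F_1$ ($a=1$); in both cases triviality on one block forces triviality on all blocks, so $g$ fixes the outward blocks of $S(y,2)$ as well. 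Ranging over all such $y$ gives $\Fix_G(B(x,n))=\Fix_G(B(x,n+1))$ for all $n\ge 1$ (resp. $n\ge 2$), whence $\Fix_G(B(x,1))$ (resp. $\Fix_G(B(x,2))$) equals $\bigcap_n\Fix_G(B(x,n))=1$; an open trivial subgroup forces $G$ to be discrete.

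Finally, for $a=d\Rightarrow\overline G=\mathrm U(F)$ I would first apply universality (Proposition \ref{prop:universality}, available since $G$ is vertex-transitive, locally $F$, and $F$ is transitive) to fix a labelling with $G\le\mathrm U(F)$, so $\overline G\le\mathrm U(F)$. For the reverse inclusion it suffices, both groups being closed and vertex-transitive, to show $G_b$ is dense in the profinite stabilizer $\mathrm U(F)_b=\varprojlim F(n)$, i.e. that $\varphi_n(G_b)=F(n)$ for all $n$. I would induct on $n$: the base case is the local action hypothesis, and for the step, writing $F(n+1)=F(n)\ltimes F_1^{\Delta_{n+1}}$, it remains to realize the new wreath factor $F_1^{\Delta_{n+1}}=\prod_{y\in S(b,n)}F_1$ by elements of $\Fix_G(B(b,n))$. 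Using $a=d$ at the parent of a vertex $y_0\in S(b,n)$ produces an element acting as a prescribed $f\in F_1$ at $y_0$ and trivially on the other second-neighbour blocks; conjugating by $G_b$, whose image is all of $F(n)$ and acts transitively on $S(b,n)$, then spreads this across $S(b,n)$, and taking products fills the factor. I expect the main obstacle to be exactly this localization: arranging the element to fix all of $B(b,n)$ while acting nontrivially only below $y_0$ genuinely requires the independence (Property $P$) of the ambient group rather than $a=d$ alone. Once density is in hand the implications combine: the values $a\in\{0,1,d\}$ are exhaustive and exclusive, $\mathrm U(F)$ is non-discrete because $F_1\neq 1$, and a discrete subgroup of the locally compact group $\Aut(T)$ is closed; hence $G$ discrete is incompatible with $\overline G=\mathrm U(F)$, which yields both asserted equivalences.
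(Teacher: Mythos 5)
First, note that the paper does not prove this theorem: it explicitly omits the proof and cites \cite[Proposition 3.3.1]{BM00a}, so your proposal can only be judged on its own merits and against the cited source. Most of your outline is sound and is indeed the Burger--Mozes line of argument. The identification of $\phi_2(K)$ as a subgroup of $\prod^d F_1$, the observation that $\St_G(x)\cap\St_G(x_i)$ realizes all of $F_1$ on the $i$-th factor (this works because a preimage in $\St_G(x_i)$ of an element fixing the $x$-direction automatically fixes $x$, so radius-$1$ information suffices here), the resulting normality of $\pi_i(Q)$ in $F_1$, the structure theorem for subdirect products of non-abelian simple groups together with primitivity of the $2$-transitive $F$ giving $a\in\{0,1,d\}$, the fixator-chain argument for $a\in\{0,1\}\Rightarrow G$ discrete, and the final bookkeeping deriving both equivalences from the two one-way implications --- all of this is correct.

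The genuine gap is exactly where you flag it, and it is not a technical wrinkle but the heart of the theorem: the inductive step for $a=d\Rightarrow\overline{G}=\mathrm{U}(F)$, i.e.\ showing that $\Fix_G(B(b,n))$ surjects onto $\prod_{y\in S(b,n)}F_1$ (its action on $S(b,n+1)$). The element $g$ you obtain from ``$a=d$ at the parent $z$ of $y_0$'' lies in $\Fix_G(B(z,1))$ and fixes the other blocks of $S(z,2)$, but \emph{nothing} constrains it outside $B(z,2)$: for $n\geq 2$ it may move vertices of $B(b,n)$ lying in other branches, so $g\notin\Fix_G(B(b,n))$. Conjugating by $G_b$ only transports this defect (the conjugate fixes a translated small set, still not $B(b,n)$), and products of such elements have uncontrolled actions on the remaining blocks of $S(b,n+1)$, so ``taking products fills the factor'' does not follow. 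Producing an element that fixes $B(b,n)$ yet acts prescribedly below $y_0$ is essentially an independence (Property $P$) statement for $\overline{G}$, which is what is being proved --- any argument assuming it is circular. The known repair, in Burger--Mozes, requires further ideas your outline does not contain: pass to the closure so that stabilizers are compact, and exploit that $F_1$ is \emph{perfect}, realizing prescribed local actions as commutators $[p,q]$ of two elements whose uncontrolled supports cancel in the commutator; equivalently, one shows that the subgroup $D\leq F_1$ of local actions at $y_0$ realized by elements of $\overline{G}$ supported on the half-tree below $y_0$ is normalized by all of $F_1$ (conjugation via $\St(z)\cap\St(y_0)$), hence trivial or full by simplicity, and then the real work is proving $D\neq 1$ by a limiting/commutator argument. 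Note finally that the gap infects \emph{both} stated equivalences, not just the second: without it you have not even shown $a=d\Rightarrow G$ non-discrete (a priori the fixator chain could still terminate), and your concluding exclusivity argument uses the gapped implication in both directions.
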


We omit the proof of this theorem,
but we quote it to emphasize that in some cases
there are {\it{local conditions}} on a group $G$ which already imply the stabilization of its $k$-closures. 
Indeed, if the action of $G$ on every 1-ball is contained in $F$, then the case $a=d$ in the above theorem
yields that $\overline{G} = G^{(1)}$. 

Let $k$ be a local field  with integer ring $\mathcal{O}$, maximal ideal $\mathcal{P} \lhd \mathcal{O}$ and residue field $F = \mathcal{O}/\mathcal{P}$. 
For the action of the group $G= \PGL_2(k)$ on its Bruhat--Tits tree,
the local action is given by the group $F=GL_2(F)$ and its action on the projective line $\mathbb{P}^1F$. 
Below we discuss the fact that in this case the sequence of $k$-closures $\{G^{(k)} \ | \ k \in \mathbb{N} \}$
never stabilizes and hence gives rise to an infinite sequence of simple groups containing $G$. 
The theorem of Burger--Mozes above implies in particular that this kind of behaviour would never be possible when the ``local group'' $F$ is $\Sym(6)$, for example.

\subsection{Finding infinitely many non-discrete simple groups}
Returning to the main goal of the section, we have the following recipe to find simple subgroups of $\Aut(T)$:
\begin{enumerate}
 \item start off with some geometrically dense $G\leq \mathrm{Aut}(T)$,
 \item form its $k$-closures (which all satisfy Property $P_k$),
 \item use Theorem \ref{Tits-k} to obtain the simple subgroups $(G^{(k)})^{+_k}$. 
\end{enumerate}
We still need to ensure that these subgroups are non-discrete and different from each other, which will follow from the results below.

\begin{lemma}
 If $G\leq \mathrm{Aut}(T)$ does not stabilize a proper subtree of $T$ we have
 \begin{itemize}
  \item[(i)] $(G^{(k)})^{+_k}$ is an open subgroup of $G^{(k)}$.
  \item[(ii)] $(G^{(k)})^{+_k}$ is non-discrete if and only if $G^{(k)}$ is non-discrete.
  \item[(iii)] $(G^{(k)})^{+_k}$ satisfies Property $P_k$.
 \end{itemize}
\end{lemma}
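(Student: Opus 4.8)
The plan is to work throughout with the fact that, by definition, $H:=(G^{(k)})^{+_k}$ is generated by the pointwise stabilizers $\Fix_{G^{(k)}}(e^{k-1})$ as $e$ ranges over the edges of $T$, together with the observation that $\hat H:=G^{(k)}$ satisfies Property $P_k$ by Proposition \ref{A:prop}. For (i), I would note that local finiteness of $T$ makes $e^{k-1}$ a finite subtree, so that $\Fix_{G^{(k)}}(e^{k-1})=G^{(k)}\cap\Fix_{\Aut(T)}(e^{k-1})$ is the intersection of $G^{(k)}$ with a basic open set and hence is a nonempty open subset of $G^{(k)}$ containing the identity. Since $H$ contains such an open neighbourhood of the identity, $H$ is an open subgroup of $G^{(k)}$; being open it is also closed in $G^{(k)}$, a fact I will reuse in the last step.

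For (ii), one direction is immediate, as any subgroup of a discrete group is discrete, so if $G^{(k)}$ is discrete then so is $H$. For the converse I would invoke (i): an open subgroup of a non-discrete group is non-discrete, because if $\{\id\}$ were open in the open subgroup $H$ it would then be open in $G^{(k)}$ as well, forcing $G^{(k)}$ to be discrete.

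Part (iii) is the substance of the statement. Fixing a path $C$ with at least one edge (the single-vertex case being trivial), I would prove that the fixators coincide, namely $\Fix_{H}(C^{k-1})=\Fix_{\hat H}(C^{k-1})$; once this is established, the induced local groups $F_x$ and the map $\Phi$ for $H$ are literally those for $\hat H$, so $\Phi$ is an isomorphism and $H$ satisfies $P_k$. Since $H\le\hat H$, only the inclusion $\Fix_{\hat H}(C^{k-1})\subseteq H$ requires proof. The key observation is that a ``pure'' element $\psi_x\in\Fix_{\hat H}(C^{k-1})$ supported on a single $T_x$ already lies in $H$: it fixes $C^{k-1}$ pointwise, hence fixes $e^{k-1}\subseteq C^{k-1}$ pointwise for any edge $e$ of $C$, so it is one of the generators of $H$. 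By Property $P_k$ for $\hat H$, every element of $\Fix_{\hat H}(C^{k-1})$ decomposes, via the isomorphism $\Phi$, into its pure components indexed by the vertices of $C$, each of which lies in $H$.

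The one genuine obstacle is the case of an infinite path $C$, where this decomposition is an infinite product and so need not lie in $H$ a priori. Here I would pass to a limit: enumerating the pure components and taking finite partial products yields elements $\psi^{(n)}\in H$, and since the subtrees $T_x$ partition $V(T)$, for each fixed vertex $v$ the value $\psi^{(n)}(v)$ is eventually constant, so $\psi^{(n)}$ converges in $\Aut(T)$ to the full element $\psi\in\Fix_{\hat H}(C^{k-1})\subseteq\hat H$. Because $H$ is closed in $\hat H$ by (i), the limit $\psi$ lies in $H$, which gives the desired inclusion and completes the proof of $P_k$ for $H$.
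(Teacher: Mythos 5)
Your proof is correct, and parts (i) and (ii) are exactly the paper's arguments. For (iii) you also arrive at the same key fact as the paper --- the containment $\Fix_{G^{(k)}}(C^{k-1})\le H:=(G^{(k)})^{+_k}$, which forces the fixator, the local groups $F_x$, and the map $\Phi$ for $H$ to coincide with those for $G^{(k)}$ --- but you take a detour to get there. Notice that your ``key observation'' never uses purity: \emph{any} $\psi\in\Fix_{G^{(k)}}(C^{k-1})$, not just an element supported on a single $T_x$, fixes $e^{k-1}\subseteq C^{k-1}$ pointwise for each edge $e$ of $C$, and hence already lies in the generating subgroup $\Fix_{G^{(k)}}(e^{k-1})$ of $H$. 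This one-line argument,
$$\Fix_{G^{(k)}}(C^{k-1})=\bigcap\,\bigl(\Fix_{G^{(k)}}(e^{k-1})\ \mid\ e\text{ an edge of }C\bigr)\le (G^{(k)})^{+_k},$$
is precisely the paper's proof, and it renders your decomposition into pure components --- and with it the entire limiting argument for infinite paths --- superfluous. Your limit argument is not wrong (the disjointly supported components commute, the partial products converge pointwise, and $H$ is closed in $G^{(k)}$ because open subgroups are closed), so there is no gap; it is simply machinery invoked to reassemble an element that was already, as a whole, a member of $H$. One small point in your favour: making local finiteness explicit in (i) is a good catch, since openness of $\Fix(e^{k-1})$ genuinely needs $e^{k-1}$ to be a finite vertex set; the paper uses this tacitly.
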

\begin{proof}
(i) The group $(G^{(k)})^{+_k}$ is generated by fixators of $k$-edges (in particular, fixators of finite sets of vertices), which are basic open sets. 
  Hence it is open.\\
(ii) This follows from the facts that all subgroups of discrete groups are discrete and that all open subgroups of a non-discrete group are non-discrete.\\
(iii) Let $C$ be some path in $T$. Since $G^{(k)}$ satisfies $P_k$ we have that
  $$\prod_{x\in C} \Fix_{(G^{(k)})^{+_k}}(C^{k-1})_x \leq \prod_{x\in C} \Fix_{(G^{(k)})}(C^{k-1})_x=\Fix_{(G^{(k)})}(C^{k-1})$$
  and $$\Fix_{(G^{(k)})}(C^{k-1})=\bigcap (\Fix_{(G^{(k)})}(e^{k-1}) \mid e \text{ is an edge contained in } C)\leq (G^{(k)})^{+_k}.$$
\end{proof}

\begin{theorem}
 Suppose that $G\leq \mathrm{Aut}(T)$ is geometrically dense. 
 Then $(G^{(r)})^{+_r}\leq (G^{(k)})^{+_k}$ for every $r\geq k$, with equality if and only if $G^{(r)}=G^{(k)}$.
\end{theorem}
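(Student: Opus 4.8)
The plan is to establish the inclusion directly from the generating sets, and then prove the two implications of the equivalence separately. The forward implication (equality of the derived groups forcing equality of the closures) is the one needed for the application to infinitely many simple groups, and the only genuinely new idea enters there, in the form of a ``local correction'' trick.

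First the inclusion. Since $r\geq k$, Proposition~\ref{A:closure}(ii) gives $G^{(r)}\leq G^{(k)}$, and for every edge $e$ the thickened edge $e^{r-1}$ contains $e^{k-1}$. Hence each defining fixator $\Fix_{G^{(r)}}(e^{r-1})$ of $(G^{(r)})^{+_r}$ consists of elements of $G^{(k)}$ that fix $e^{k-1}$, so it is contained in $\Fix_{G^{(k)}}(e^{k-1})\leq(G^{(k)})^{+_k}$. Passing to the generated subgroup yields $(G^{(r)})^{+_r}\leq(G^{(k)})^{+_k}$.

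For the implication $(G^{(r)})^{+_r}=(G^{(k)})^{+_k}\Rightarrow G^{(r)}=G^{(k)}$ I would argue the contrapositive. Assume $G^{(r)}\subsetneq G^{(k)}$ and pick $g\in G^{(k)}\setminus G^{(r)}$; by definition of the $r$-closure there is a vertex $z$ with $g|_{B(z,r)}$ agreeing with no element of $G$. Fix \emph{any} edge $e=(x,y)$ and, using $g\in G^{(k)}$, choose $g_{0}\in G$ with $g_{0}|_{B(x,k)}=g|_{B(x,k)}$. Then $\phi:=g_{0}^{-1}g$ lies in $G^{(k)}$ and fixes $B(x,k)\supseteq e^{k-1}$, so $\phi\in\Fix_{G^{(k)}}(e^{k-1})\leq(G^{(k)})^{+_k}$. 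The crux is that $\phi$ \emph{retains} a bad $r$-ball at $z$: if $\phi|_{B(z,r)}=g_{1}|_{B(z,r)}$ for some $g_{1}\in G$, then $g$ and $g_{0}g_{1}\in G$ would agree on $B(z,r)$, contradicting the choice of $z$. Thus $\phi\notin G^{(r)}$, and since $(G^{(r)})^{+_r}\leq G^{(r)}$ we get $\phi\in(G^{(k)})^{+_k}\setminus(G^{(r)})^{+_r}$, so the two derived groups differ.

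For the converse, set $H:=G^{(r)}=G^{(k)}$. As $H\supseteq G$ it is geometrically dense, and as $H=G^{(k)}$ it satisfies Property $P_k$ by Proposition~\ref{A:prop}; hence $H^{+_k}=(G^{(k)})^{+_k}$ is simple or trivial by Theorem~\ref{Tits-k}. If it is trivial, the inclusion already gives equality. Otherwise I would check that $H^{+_r}=(G^{(r)})^{+_r}$ is nontrivial: choosing an edge $e=(x,y)$ with $\Fix_{H}(e^{k-1})\neq\{1\}$ and applying Property $P_k$ to the one-edge path decomposes this fixator into independent factors on the two half-trees, so there is $\phi\neq 1$ acting on $T_{x}$ and fixing $T_{y}$ pointwise; picking any edge $f$ deep enough that $f^{r-1}\subseteq T_{y}$ gives $\phi\in\Fix_{H}(f^{r-1})\leq H^{+_r}$. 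Since conjugation permutes the defining fixators, $H^{+_r}\lhd H$, hence $H^{+_r}\lhd H^{+_k}$; simplicity of $H^{+_k}$ together with $H^{+_r}\neq\{1\}$ forces $H^{+_r}=H^{+_k}$, completing the equivalence. I expect the only real obstacle to be the correction step in the forward implication — seeing that multiplying $g$ by a local $G$-correction lands it in a thick-edge fixator (hence in $(G^{(k)})^{+_k}$) while preserving a non-$G$-admissible $r$-ball; the remaining points are a direct generator computation and a normal-subgroup-of-a-simple-group argument, with only the nontriviality of $H^{+_r}$ requiring the half-tree support observation.
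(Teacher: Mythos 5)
Your proof is correct, and one of its two directions takes a genuinely different route from the paper. The inclusion and the implication $(G^{(r)})^{+_r}=(G^{(k)})^{+_k}\Rightarrow G^{(r)}=G^{(k)}$ essentially coincide with the paper's argument: the paper proves this implication directly (for $x\in G^{(k)}$ and a vertex $v$, pick $g\in G$ with $xg^{-1}$ fixing $B(v,k)\supseteq e^{k-1}$, so $xg^{-1}\in(G^{(k)})^{+_k}=(G^{(r)})^{+_r}\le G^{(r)}$ and hence $x\in G^{(r)}$), whereas you run the contrapositive, which forces you to add the (correct) verification that the corrected element $g_0^{-1}g$ still has an $r$-ball at $z$ matching no element of $G$ --- a step the direct formulation simply avoids. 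The real divergence is in the implication $G^{(r)}=G^{(k)}\Rightarrow(G^{(r)})^{+_r}=(G^{(k)})^{+_k}$: writing $H:=G^{(k)}=G^{(r)}$, the paper proves it elementarily by applying Property $P_k$ to \emph{each} generator $g\in\Fix_{H}(e^{k-1})$, factoring $g=g_1g_2$ with each $g_i$ supported on one half-tree, and noting that each $g_i$ then fixes some deep $r$-thick edge, so $g\in H^{+_r}$; you instead invoke Theorem \ref{Tits-k} to get that $H^{+_k}$ is simple or trivial, show $H^{+_r}$ is normal in $H$ (conjugation permutes the fixators $\Fix_H(e^{r-1})$) and nontrivial, and conclude $H^{+_r}=H^{+_k}$ by simplicity. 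Your route is valid --- geometric density passes to the overgroup $H\supseteq G$, and $H$ satisfies $P_k$ by Proposition \ref{A:prop} --- but it uses heavier machinery: your nontriviality step (an element supported on one half-tree fixes a deep $r$-thick edge) is precisely the germ of the paper's argument, which, applied to every generator rather than to a single element, yields the inclusion $H^{+_k}\le H^{+_r}$ outright with no appeal to the simplicity theorem. Both arguments quietly use that every half-tree contains an edge together with its $(r-1)$-neighbourhood, which is legitimate here since geometric density rules out leaves.
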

\begin{proof}
 The first claim follows from the fact that $(G^{(r)})\leq (G^{(k)})$ for every $r\geq k$. 
 
 Suppose that $G^{(r)}=G^{(k)}$ and let $g\in (G^{(k)})^{+_k}$ be a generator (so that it fixes some edge $e=(v,w)$). 
 Now, $(G^{(k)})$ satisfies $P_k$ and therefore $g=(g_1,g_2)$ where $g_1\in (G^{(k)})$ fixes $T_w$ pointwise and $g_2\in (G^{(k)})$ fixes $T_v$ pointwise. 
 In particular, there exist edges $e_1\in E(T_w)$ and $e_2\in E(T_v)$ such that $g_1$ fixes $e_1^{r-1}$ and $g_2$ fixes $e_2^{r-1}$. 
 Hence $g_1, g_2$ are generators of $(G^{(r)})^{+_r}$ and  $g\in (G^{(r)})^{+_r}$, as required.
 
 Conversely, if $(G^{(r)})^{+_r}\leq (G^{(k)})^{+_k}$ then for any $x\in G^{(k)}$ and any vertex $v$ there is some $g\in G$ such that $xg^{-1}$ fixes $B(v,k)$. 
 In particular, $xg^{-1}$ fixes $e^{k-1}$ where $e=(v,u)$ is some edge starting at $v$. 
 Thus $xg^{-1} \in (G^{(k)})^{+_k}=(G^{(r)})^{+_r}$, that is $xg^{-1}\in G^{(r)}$. 
 Since $g\in G\leq G^{(r)}$ we conclude that $x\in (G^{(r)})$.
\end{proof}

Thus, in order to construct infinitely many distinct t.d.l.c.\ simple non-discrete subgroups of Aut$(T)$ 
it suffices to find examples with infinitely many distinct $k$-closures.
By Corollary \ref{stable_k_closures}, this amounts to finding examples which do not satisfy Property $P_k$ for any $k$.

\begin{example} The Baumslag--Solitar group 
 $\mathrm{BS}(m,n):=\langle a,t \mid t^{-1}a^mt=a^n\rangle$ 
 does not satisfy $P_k$ for any $k$ when $m, n$ are coprime and
 the group acts on its Bass--Serre tree (which is isomorphic to $T_{m+n}$).
 
 Recall that $T$, the Bass--Serre tree of $\mathrm{BS}(m,n)$, has vertices the left (say) cosets of $\langle a \rangle$ and 
 (directed) edges labelled by $t^{\pm 1}$ from $u\langle a\rangle$ to $v\langle a \rangle$ if and only if there is some $i$ such that
 $v\langle a\rangle = ua^it^{\pm 1}\langle a \rangle$.
$BS(m,n)$ acts on $T$ by left multiplication and, in the non-solvable case (when neither $m$ nor $n$ equals 1),
this action is geometrically dense.

We claim that when $m,n$ are coprime $G:=BS(m,n)$ does not satisfy $P_k$ for any $k$. 
To see this (Figure \ref{A:Bass-Serre} may be helpful), consider the edge $e=(\langle a\rangle, t^{-1}\langle a \rangle)$. 
We will show that $\Fix_G(T_{\langle a \rangle)})=\Fix_G(T_{t^{-1}\langle a \rangle)})$ while $\Fix_G(e^{k-1})\neq 1$
(it contains, for instance, $a^n$).
Let $x\in \Fix_G(T_{t^{-1}\langle a \rangle)})$, then $x$ must also fix $\langle a \rangle$ as all other neighbours of $t^{-1}\langle a \rangle$ are fixed by $x$. 
Thus $x$ must be of the form $a^{*}$ where $*$ is a multiple of $n$; say $x=a^{cn^j}$ for some $c,j\in \mathbb{N}$ with $j>0$ and $c$ not divisible by $n$. 
Note that $T_{t^{-1}\langle a \rangle}$ contains vertices of the form $(t^{-1}a)^i t^{-1}\langle a \rangle$ for all $i\in\mathbb{N}$. 
Pick some $i\geq j$. 
If $x=a^{cn^j}$ fixes $(t^{-1}a)^it^{-1}\langle a \rangle$ then, 
since $ta^{cn^j}t^{-1}=(ta^nt^{-1})^{cn^{j-1}}=a^{mcn^{j-1}}$, we have
$a^{cn^j}(t^{-1}a)^it^{-1}=(t^{-1}a)^j a^{cm^j} (t^{-1}a)^{i-j} t^{-1}=(t^{-1}a)^it^{-1}a^{*}$.
For the last equality to hold, $cm^j$ must be a multiple of $n$, 
which it cannot be by the assumption that $m,n$ are coprime and the choice of $c$, unless $c=0$.
Thus we must have $x=1$. 
A similar argument yields that $\Fix_G(T_{\langle a \rangle)})=1$. 
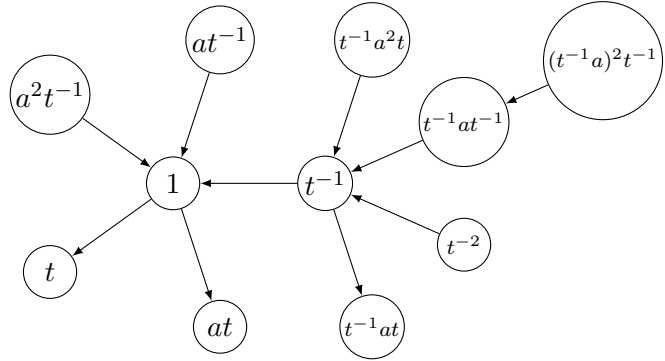
\begin{figure}\label{A:Bass-Serre}
 \begin{tikzpicture}[grow cyclic,shape=circle,level distance=20mm]
\tikzstyle{level 1}=[sibling angle=72]
\tikzstyle{level 2}=[sibling angle=48]
\tikzstyle{level 3}=[sibling angle=36]
\tikzstyle{every node}=[circle, draw,minimum size=0.7cm, inner sep=1pt]

\node[circle, draw] (root) {$1$}
	child {
		node[circle,draw] (t) {$t$} edge from parent[-latex]
		}
	child {
		node[circle,draw] (at) {$at$} edge from parent[-latex]
		}
	child {
		node[circle,draw] (t-1) {$t^{-1}$} edge from parent[latex-]
		child { node[circle, draw] (t-1at) {$\scriptstyle{t^{-1}at}$} edge from parent[-latex] }
		child { node[circle, draw] (t-2) {$\scriptstyle{t^{-2}}$} }
		child { node[circle, draw] (t-1at-1) {$\scriptstyle{t^{-1}at^{-1}}$}  
			child { node[circle, draw] (t-1at-1at-1) {$\scriptstyle{(t^{-1}a)^2t^{-1}}$} }
			}
		child { node[circle, draw] (t-1a2t) {$\scriptstyle{t^{-1}a^2t}$} edge from parent[latex-] }
		}
	child {
		node[circle,draw] (at-1) {$at^{-1}$} 
	edge from parent[latex-]
	}
	child {
		node[circle,draw] (a2t-1) {$a^2t^{-1}$} 
	edge from parent[latex-]
	};

\end{tikzpicture}

\caption{Part of the Bass-Serre tree for $BS(2,3)$. The nodes are labelled by their coset representatives.
The arrows on the edges indicate travelling in the $t$ direction.}
\end{figure}
\end{example}

\begin{example}
The group $G = \mathrm{PSL}(2,\mathbb{Q}_p)$ acting on its Bruhat--Tits tree (which is isomorphic to $T_{p+1}$) also does not satisfy $P_k$ for any $k$.
Indeed it is well known that the action of $G$ on $\partial T$ is isomorphic to the action of the same group on the projective line $\mathbb{P}^{1}(\mathbb{Q}_p)$ by fractional linear transformations. 
In particular the stabilizer of three boundary points is trivial.
This means that an element of $G$ is completely determined by its action on three distinct points of $\partial T$ 
(these elements should be thought of as $p$-adic M\"{o}bius transformations). 

Now let $C = (\ldots,c_{-1},c_0,c_1,c_2, \ldots)$ be an infinite or finite geodesic. 
And assume that $(\ldots f_0,f_1,f_2, \ldots) \in \prod_{i=-\infty}^{\infty} F_i$ with $f_i \in \Fix_G(C^{k-1})|_{T_i}$. 
Then each such $f_i$ is defined on $\partial T_i$ which contains many boundary points.
Hence each $f_i$ admits a unique extension to the whole tree.
This is a strong obstruction to satisfying Property $P_k$, for any $k$. 

\end{example}

We note that this method finds infinitely many t.d.l.c.\ simple non-discrete groups which are pairwise distinct as subgroups of Aut$(T)$. 
It would be desirable to know whether these subgroups are pairwise non-isomorphic. This is stated as work in progress in \cite{BEW11}. 
Using different methods, S.\ Smith (\cite{S14}) has found uncountably many t.d.l.c.\ simple non-discrete groups which are pairwise non-isomorphic.
This is discussed by C.\ Reid and G.\ Willis in their chapter. 

\newpage

\end{document}